 \documentclass[11pt,reqno]{amsart}
\usepackage{amssymb,mathrsfs,graphicx,enumerate,subfigure}
\usepackage{amsmath,amsfonts,amssymb,amscd,amsthm,bbm}
\numberwithin{figure}{section}
\newcommand{\lbl}[1]{\label{#1}}
\usepackage{tikz}
\tikzset{
	block/.style={
		draw, 
		rectangle, 
		minimum height=1.5cm, 
		minimum width=3cm, align=center
	}, 
	line/.style={->,>=latex'}
}
\usetikzlibrary{positioning,arrows}
\usepackage{caption}
\usetikzlibrary{arrows.meta}

\usepackage[retainorgcmds]{IEEEtrantools}
\usepackage{colortbl}
\usepackage{subfigure,here}
\newcommand{\bbr}{\mathbb R}
\topmargin-0.1in \textwidth6.in \textheight8.5in \oddsidemargin0in
\evensidemargin0in
\title[ Stochastic Cucker-Smale  system  in  a harmonic field]{Flocking and concentration behavior for the stochastic Cucker-Smale  system  in  a harmonic field}

\author[Du]{Linglong Du}
\address[Linglong Du]{\newline Department of Applied Mathematics and Institute for Nonlinear Science\newline
	Donghua University, Shanghai, P.R. China}
\email{matdl@dhu.edu.cn}

\author[Zhou]{Xinyun Zhou}
\address[Xinyun Zhou]{\newline Department of Applied Mathematics  \newline  Donghua University, Shanghai, P.R. China} 
\email{15002192230@163.com}

\newtheorem{theorem}{Theorem}[section]
\newtheorem{lemma}{Lemma}[section]

\newtheorem{remark}{Remark}[section]

\newtheorem{definition}{Definition}[section]

\begin{document}
	\date{\today}
	\date{\today}

\subjclass[2010]
{34D05, 74A25, 82C22} \keywords{Flocking and concentration,  Cucker-Smale model in a harmonic potential field, mean-field  limit}
	\begin{abstract}
 We consider  the Cucker-Smale  system with multiplicative noise in  a harmonic  potential field and investigate the effect of harmonic  potential field.  In the presence of external potential force, the system is expected to emerge into  almost surely  velocity flocking  and  spatial concentration, due to the alignment mechanism, confining harmonic  potential field and multiplicative noise. By constructing a stochastic Lyapunov functional, we derive a sufficient condition for the almost surely flocking and concentration behavior  for the  stochastic particle model, and verify it numerically.  Then, we discuss the  flocking and concentration behavior  for the  mean field Vlasov-type kinetic model.  Moreover, a rigorous analysis of the uniform mean-field limit  estimate for the limit process from the stochastic model to the kinetic one is provided. 
	\end{abstract}
	
\maketitle \centerline{\date}

\section{Introduction} \label{sec:1}
\setcounter{equation}{0}
Collective behaviors of  particle-based  systems have been widely studied in recent years. 
 When systems have a finite number of particles, it is often argued that the microscopic approach is the appropriate one. The  particle Cucker-Smale (C-S) model is one of many microscopic attempts to represent
  such phenomena.  It was originally introduced by Cucker and Smale \cite{CS}, and studied by many authors, for example, \cite{HL,HE, ME, P} for the particle system with different communication kernels,  \cite{LW} for the  particle system with time delay, \cite{AH, H1, PL} for the particle system with different noises,   \cite{CF} for the kinetic C-S model. However, in many realistic scenarios, particles driven by alignment are also subject to environment forces. 
  Recently, \cite{RE} introduced the following C-S model with the  convex potential force $U$:
  \begin{align}
	\left\{\begin{aligned}\label{CS}
		&{\dot x}_t^i  =v_t^i ,   \quad  t>0,  \quad  1 \leq i \leq N,  \\
		&{\dot v}_t^i = \frac{\kappa}{N}\sum_{j=1}^N \psi(|x_t^j-x_t^i|) (v_t^j-v_t^i) -\nabla U(x_t^i).
	\end{aligned}\right.
\end{align}

  In this paper,  similar to the work studied in \cite{H1}, we add the stochastic influences  to  the deterministic dynamical system   \eqref{CS} with the quadratic potential $U(x)=\frac{1}{2}x^2$, 
  and consider the following stochastic system
     \begin{align}
	\left\{\begin{aligned}\label{SCS1}
		&dx_t^i =v_t^i dt,   \qquad  t>0,  \quad  1 \leq i \leq N,  \\
		&dv_t^i = \frac{\kappa}{N}\sum_{j=1}^N \psi(|x_t^j-x_t^i|) (v_t^j-v_t^i) dt-x_t^idt+\sqrt{2\sigma} (v_t^i-v_t^c) dW_t.
	\end{aligned}\right.
\end{align}
  $(x_t^i ,v_t^i)\in\mathbb{R}^{2d}$ denote the velocity and position of the $i$th agent. The noise $W_t$ is the same Brownian  motion in all directions of  $\mathbb{R}^d$. The diagonal diffusion matrix  $\sqrt{2\sigma} (v_t^i-v_t^c)$ tells us that ``noise intensity" depends  on the localization of the velocity in a simple way, here $\sigma$ is a positive constant. $\psi(|x_t^j-x_t^i|)$ is  a communication weight function and satisfies the symmetry condition and translation invariance. $|\cdot |$ denotes the standard  $\ell^2$-norm in $\mathbb{R}^{d}$.
  
 When the number of particles $N$ is excessively large, it becomes increasingly difficult to follow the dynamics of each individual agent. Hence, instead of simulating the
behavior of each individual agent, we would like to describe the kinetic collective behavior
encoded by the density distribution whose evolution is governed by one sole mesoscopic partial differential equation.
Applying the BBGKY hierarchy,  one can  derive the following  corresponding kinetic model from  \eqref{SCS1}:
\begin{align}\label{km}
	\begin{split}
		&\partial_t f +  v \cdot\nabla_{x}  f + \nabla_{v} \cdot (L[f]f)-x \cdot \nabla_{v}f=\sigma\Delta_{v}(|v-v_c|^2f) ,   \\
		&L[f](x,v,t) := -\kappa\int_{\mathbb{R}^{2d}} \psi \left(| x- y |\right) \left(v-v_\ast\right) f(y,v_\ast, t) dv_\ast dy, \quad  x,v \in \mathbb{R}^d,\ t>0,\\
	\end{split}
\end{align}
subject to suitable initial configuration 
\begin{align}\label{k1}
	\begin{split}
		&f(x,v,0)=f_0(x,v).\\
	\end{split}
\end{align}

In this paper,  we address the following  question: \newline

(Q):~ With the confining harmonic potential, can we drive a sufficient condition to rigorously verify the formation of the velocity flocking and  spatial concentration result both for the stochastic  particle model  \eqref{SCS1} and related mean-field  Vlasov-type kinetic model \eqref{km}?  \newline

 We exploit the above question from both the analytic and numerical views.  First we derive a sufficient framework  leading to the  formation of the  almost surely flocking in velocity and  spatial concentration due to the alignment  mechanism, confining potential and multiplicative noise.   Without the confining potential field, the authors proved the velocity flocking result by  deriving  a differential inequality system and solving a geometric Brownian motion equation for the velocity  directly \cite {AH}. The result in their paper also implied that the multiplicative noise  could give the strong flocking result, compared with the case of  the additive noise.   However,  system  \eqref{SCS1} is coupled by the existence of confining harmonic field. We use the  Lyapunov functional method and define a related  differential operator to get the almost surely velocity flocking  and spatial concentration result. By using Euler's method, we verify the analytic results numerically. 
 
 Second, we study the long-time behavior for the related mean-field  Vlasov-type kinetic model  \eqref{km}. We introduce  a Lyapunov functional $\tilde{\mathcal L}[f]$ (see \eqref{L}),  which is equivalent  to the standard Lyapunov functional $\mathcal L[f]$( see  \eqref{L1}) measuring the  position and velocity variances of the kinetic density function $f$. We exploit  a sufficient condition for the flocking and concentration result  for the mean-field  Vlasov-type kinetic model \eqref{km}. Due to the confining potential term, the standard Lyapunov functional  is not enough to  derive the Gr\"onwall's  type inequality for it. Therefore we consider the equivalent functional  containing the cross term
$\int_{\mathbb{R}^{2d}} (x-x_c)\cdot(v-v_c)fdxdv$.  We also address the existence of the classical solution for the kinetic model in a weighted Sobolev space.

Third, since we are dealing with the Vlasov-type kinetic equation corresponding to the  stochastic particle C-S model with a harmonic potential field, we  discuss the mean-field limit  and provide the rigorous analysis  for it.  In \cite{BC}, rigorous finite-in-time  mean-field limit has been derived  from the  C-S particle system with additive noise to the corresponding C-S Vlasov-type equation.  However, to get the uniform-in-time  mean-field limit estimate for our model, we need to perform a more rigorous analysis, considering the coupling difficulty brought by  the confining potential filed. We also utilize the detailed information of the McKean process, which was developed  in \cite{HKR}.  Therefore, by making use of  the flocking estimate for the limit process, we establish  the proof of  the uniform-in-time mean-field limit.

We introduce the following definition.
\begin{definition} \label{def}
The stochastic system has an asymptotic  strong stochastic flocking  in velocity and concentration in position if the position and velocity processes  $\{x_t^i, v_t^i\}$($i=1,\cdots,N$) satisfy the following  condition:
       For $1\leq i,j\leq N$, the differences of all pairwise position and velocity processes go to zero asymptotically,
        \begin{equation*} 
        \lim_{t \to \infty} (|x_t^i-x_t^j|+ |v_t^i-v_t^j|)=  0,\quad a.s.
        \end{equation*}
   \end{definition}
    The rest of this paper is organized as follows. In Section \ref{sec:2}, we study the  velocity flocking and  spatial concentration result for the stochastic particle model  analytically and  numerically. 
     In Section \ref{sec:3}, we present the well-posedness and long-time behavior of the kinetic C-S Vlasov-type equation.  In Section \ref{sec:4}, we prove the uniform-in-time mean-field  limit from the stochastic  particle C-S system to the kinetic C-S Vlasov-type equation. 
\section{Stochastic particle system}\label{sec:2}
\setcounter{equation}{0}
 In this section, we study the  velocity flocking and  spatial concentration result for the stochastic particle model  \eqref{SCS1} analytically and  numerically. We first introduce a macro-micro decomposition to decompose the system into two parts: one system  
describes the macroscopic dynamics and the other system describes the microscopic fluctuations. 

\subsection{A Macro-Micro decomposition}
 For the stochastic flocking estimate, we introduce macro (ensemble average) process 
 \begin{equation*}
x_t^c = \frac{1}{N}\sum_{i=1}^Nx_t^i,\quad v_t^c = \frac{1}{N}\sum_{i=1}^N v_t^i
\end{equation*}
and  micro  (fluctuation)  process
\begin{equation*}
\hat{x}_t^i = x_t^i-x_t^c,\quad \hat{v}_t^i = v_t^i-v_t^c.
\end{equation*}
Then $\sum\limits_{i=1}^N\hat{x}_t^i=\sum\limits_{i=1}^N \hat{v}_t^i= 0$. 

Averaging over $i$ in (\ref{SCS1}) gives the evolution of $(x_t^c,v_t^c)$:
\begin{equation}\label{macro1}
\left\{\begin{split}
& d x_t^c = v_t^cdt,\\
& dv_t^c = - x_t^c dt.
\end{split}\right.
\end{equation}
Given the  deterministic initial configuration $(x^c_0, v^c_0)$, one can get the dynamics of the macroscopic variables $(x_t^c,v_t^c)$, which satisfy the harmonic oscillator motion as follows:
 \begin{align*}
 x^c_t=x^c_0\cos t+v^c_0\sin t,  \quad
 v^c_t=v^c_0\cos t-x^c_0\sin t, \quad a.s.
       \end{align*} 

 Next, we subtract \eqref{macro1} from \eqref{SCS1} to derive the evolution of perturbation $(\hat{x}_t^i,\hat {v}_t^i)$:
\begin{equation}
\left\{\begin{split}\label{micro}
&d\hat{x}_t^i = \hat{v}_t^i dt,\\
&d\hat{v}_t^i = \frac{\kappa}{N}\sum_{j=1}^N \psi(|\hat{x}_t^j-\hat{x}_t^i|) (\hat{v}_t^j-\hat{v}_t^i)dt -\hat{x}_t^idt+\sqrt{2\sigma} \hat{v}_t^i dW_t.
\end{split}\right. 
\end{equation}

 In the next subsection, we will show that the  stochastic system \eqref{SCS1} flocks  in the sense of Definition \ref{def}.  Since
   \begin{equation*} 
        \lim_{t \to \infty} (|x_t^i-x_t^j|+ |v_t^i-v_t^j|)= \lim_{t \to \infty} (|\hat{x}_t^i-\hat{x}_t^j|+ |\hat{v}_t^i-\hat{v}_t^j|),
        \end{equation*}
 it is sufficient to show that the  flocking  and concentration occur in the microscopic system \eqref{micro}.
  \subsection{The dynamics of the microscopic system}  In this subsection, we consider the dynamics of the microscopic variable given by system \eqref{micro}. We rewrite  it without the hat notation:
\begin{align}
	\left\{\begin{aligned}\label{1}
		&dx_t^i=v_t^i dt, \quad t>0,\\
		&dv_t^i = \frac{\kappa}{N}\sum_{j=1}^N \psi(|x_t^j-x_t^i|) (v_t^j-v_t^i) dt-x_t^idt+\sqrt{2\sigma} v_t^i dW(t),\\
		&\sum\limits_{i=1}^N x_t^i=0,\quad \sum\limits_{i=1}^N v_t^i=0. 
	\end{aligned}\right.
\end{align}
 We analyze this radially symmetric communication weight function with multiplicative noise system following the Lyapunov functional method in book \cite{M}.
   Generally speaking, to deal with the following general SDE:
  \begin{equation}\label{SDE}
  	d\mathbf{x}(t)=f(\mathbf{x}(t),t)dt+g(\mathbf{x}(t),t)dW(t), \quad \mathbf{x} = (x_1,x_2,\cdots,x_n),
  \end{equation}
   one could define the differential operator $\mathscr{L}$ associated with  equation  \eqref{SDE} by
   $$\mathscr{L}:=\frac{\partial}{\partial t}+\sum\limits_{i=1}^N f_i(\mathbf{x},t)\frac{\partial}{\partial x_i}+\frac{1}{2}\sum\limits_{i,j=1}^N[g(\mathbf{x},t)g^{T}(\mathbf{x},t)]_{ij}\frac{\partial^2}{\partial x_i\partial x_j}$$
    and act this operator on a suitable positive-definite function $V$ to derive the stochastic stability.
     Now we state the first main result.
\begin{theorem} \label{dthm}
	Let $(x_t^i, v_t^i)$ be the solution to the system \eqref{1} with bounded initial configurations and the communication weight function satisfies the following positivity and boundedness conditions 
	\begin{align*}
		\begin{aligned}
			0 < \psi_m \leq	\psi(s) \leq \psi_M, \quad s\geq0.
		\end{aligned} 
	\end{align*} 
	We assume that $\kappa\psi_m>\sigma$ and $0<\beta<\min\left\{ 1,\frac{2\kappa\psi_m-2\sigma}{1+\kappa^2\psi_M^2 }\right\}$, then system \eqref{1} flocks in velocity and concentrates in position: For $1\le i,j\le N$,
	   \begin{equation*} 
	  \lim_{t \to \infty} (|x_t^i-x_t^j|+ |v_t^i-v_t^j|)\leq  \lim_{t \to \infty}e^{-\frac{1}{3}at}=0, \quad a.s.,
	          \end{equation*}	
 where $a=\min\left\{2\kappa\psi_m-2\sigma -(1+\kappa^2\psi_M^2)\beta,\frac{\beta}{2}\right\}$.
\end{theorem}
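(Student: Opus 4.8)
The plan is to build a single stochastic Lyapunov functional out of the position variance, the velocity variance, and their cross correlation, and then feed it into the almost-sure exponential stability machinery of \cite{M}. Concretely, I would set
\[
\Phi_x:=\sum_{i=1}^N|x_t^i|^2,\qquad \Phi_v:=\sum_{i=1}^N|v_t^i|^2,\qquad \Phi_{xv}:=\sum_{i=1}^N x_t^i\cdot v_t^i,
\]
and take the candidate $V:=\Phi_x+\Phi_v+\beta\,\Phi_{xv}$. Since $|\Phi_{xv}|\le\tfrac12(\Phi_x+\Phi_v)$ by Cauchy--Schwarz and $\beta<1$, one has $(1-\tfrac{\beta}{2})(\Phi_x+\Phi_v)\le V\le(1+\tfrac{\beta}{2})(\Phi_x+\Phi_v)$, so $V$ is positive definite and equivalent to $\Phi_x+\Phi_v$. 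The cross term $\Phi_{xv}$ is indispensable here: because the harmonic force couples $x$ and $v$, the drift of $\Phi_v$ alone produces an uncontrolled $-2\Phi_{xv}$ contribution that only the evolution of $\Phi_{xv}$ can absorb.

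The next step is to apply It\^o's formula to the three functionals, using $\sum_i x_t^i=\sum_i v_t^i=0$. A direct computation gives $d\Phi_x=2\Phi_{xv}\,dt$ with no noise, while the symmetry of $\psi$ lets the velocity alignment term telescope:
\[
d\Phi_v=-\frac{\kappa}{N}\sum_{i,j}\psi(|x_t^i-x_t^j|)\,|v_t^i-v_t^j|^2\,dt-2\Phi_{xv}\,dt+2\sigma\Phi_v\,dt+2\sqrt{2\sigma}\,\Phi_v\,dW.
\]
With $\psi\ge\psi_m$ and $\sum_{i,j}|v_t^i-v_t^j|^2=2N\Phi_v$, the first term is bounded above by $-2\kappa\psi_m\Phi_v$. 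For the cross correlation one finds $d\Phi_{xv}=\Phi_v\,dt-\Phi_x\,dt+A\,dt+\sqrt{2\sigma}\,\Phi_{xv}\,dW$, where $A=\frac{\kappa}{N}\sum_{i,j}\psi(|x_t^i-x_t^j|)\,x_t^i\cdot(v_t^j-v_t^i)$ is the alignment torque. This term does not telescope; I would first estimate $\sum_i|F_i|^2\le 2\kappa^2\psi_M^2\Phi_v$ for the per-particle force $F_i=\frac{\kappa}{N}\sum_j\psi(|x_t^i-x_t^j|)(v_t^j-v_t^i)$, then apply Cauchy--Schwarz and Young to obtain $|A|\le\tfrac12\Phi_x+\kappa^2\psi_M^2\Phi_v$. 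Collecting the drift of $V$, the $\pm2\Phi_{xv}$ contributions cancel and one is left with the clean bound
\[
\mathscr{L}V\le-\big[2\kappa\psi_m-2\sigma-(1+\kappa^2\psi_M^2)\beta\big]\Phi_v-\tfrac{\beta}{2}\Phi_x\le-a(\Phi_x+\Phi_v),
\]
with $a$ exactly the quantity in the statement; the hypotheses $\kappa\psi_m>\sigma$ and the upper bound on $\beta$ are precisely what force both bracketed coefficients, and hence $a$, to be positive.

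With the differential inequality in hand, the final step is the stochastic stability argument of \cite{M}. Writing $dV=\mathscr{L}V\,dt+G\,dW$ with diffusion coefficient $G:=\sqrt{2\sigma}\,(2\Phi_v+\beta\Phi_{xv})$ and applying It\^o's formula to $\log V$ yields
\[
\log V(t)=\log V(0)+\int_0^t\Big(\frac{\mathscr{L}V}{V}-\frac{G^2}{2V^2}\Big)ds+\int_0^t\frac{G}{V}\,dW,
\]
in which the It\^o correction $-G^2/(2V^2)$ is nonpositive. The equivalence $V\le(1+\tfrac{\beta}{2})(\Phi_x+\Phi_v)$ gives $\mathscr{L}V/V\le-2a/(2+\beta)$, and the bounds $|\Phi_{xv}|\le\tfrac12(\Phi_x+\Phi_v)$, $\Phi_v\le\Phi_x+\Phi_v$ together with $V\ge(1-\tfrac{\beta}{2})(\Phi_x+\Phi_v)$ show $|G|/V$ is bounded, so the martingale $M_t:=\int_0^t(G/V)\,dW$ has quadratic variation $\langle M\rangle_t\le Ct$. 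By the strong law of large numbers for martingales $M_t/t\to0$ almost surely, whence $\limsup_{t\to\infty}\tfrac1t\log V(t)\le-\tfrac{2a}{2+\beta}$ a.s. Since $\beta<1$ forces $2+\beta<3$, this rate exceeds $\tfrac{2a}{3}$, so $V$ — and hence $\Phi_x+\Phi_v$ — decays at least like $e^{-\frac{2a}{3}t}$ almost surely; taking square roots and using $|x_t^i-x_t^j|+|v_t^i-v_t^j|\le 2\sqrt{2}\,\sqrt{\Phi_x+\Phi_v}$ converts this into the claimed $e^{-\frac{1}{3}at}$ bound.

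I expect the main obstacle to be the control of the non-telescoping alignment torque $A$ in $d\Phi_{xv}$: unlike the velocity--velocity term it cannot be written as a sign-definite sum, and the Cauchy--Schwarz/Young estimate that tames it is exactly what introduces the factor $1+\kappa^2\psi_M^2$ and thereby dictates the admissible range of $\beta$. A secondary technical point is verifying the hypotheses of the almost-sure exponential stability theorem of \cite{M} — in particular that $V$ stays strictly positive (equivalently that $\Phi_x+\Phi_v>0$ is preserved, the degenerate case being the already-flocked state) and that $\langle M\rangle_t$ grows at most linearly so that the martingale strong law applies.
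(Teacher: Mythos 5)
Your proposal is correct and follows essentially the same route as the paper: the identical Lyapunov functional $\sum_i|x_t^i|^2+\beta\sum_i x_t^i\cdot v_t^i+\sum_i|v_t^i|^2$ (the paper starts with a coefficient $\alpha$ and then sets $\alpha=1$ to cancel the $\pm2\Phi_{xv}$ drift, exactly the cancellation you observe), the same symmetrization of the alignment term, the same Young estimate on the cross torque producing the factor $1+\kappa^2\psi_M^2$, and the same final bound $\mathscr{L}V\le -a(\Phi_x+\Phi_v)\le -\tfrac{2a}{3}V$. The only difference is that where the paper invokes Corollary 3.4 of \cite{M} as a black box, you unwind its proof explicitly (It\^o on $\log V$, boundedness of $|G|/V$, and the martingale strong law), which is a faithful reproduction of the cited result rather than a different argument.
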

\begin{proof}
	We define a stochastic Lyapunov functional
	\begin{align*}
		\begin{split}
			&V(\mathbf{x},t):=\alpha \sum\limits_{i=1}^{N} |x_t^i|^2+ \beta \sum\limits_{i=1}^{N} x_t^i\cdot v_t^i+\sum\limits_{i=1}^{N}  |v_t^i|^2,  \\
		\end{split}
	\end{align*}	
where $\alpha$ and $\beta$ are positive constants to be determined later and $\mathbf{x} = (x_t^1,x_t^2,\cdots,x_t^N,v_t^1,v_t^2,\cdots,v_t^N)$.
Then we get 
\begin{align*}
	\begin{split}
		& V_t(\mathbf{x},t)=0,  \\
		& V_\mathbf{x}(\mathbf{x},t)=(2\alpha x_t^1+ \beta v_t^1, \cdots ,2\alpha x_t^N+ \beta v_t^N, \beta x_t^1+2v_t^1, \cdots  , \beta x_t^N+2v_t^N),\\
    \end{split}
\end{align*}
		$\qquad \qquad \quad V_{\mathbf{x}\mathbf{x}}(\mathbf{x},t)=\begin{pmatrix} 
		2\alpha & 0 &\cdots&0&\beta&0&\cdots&0 \\ 
		0&2\alpha &\cdots&0&0&\beta&\cdots&0\\ 
		\vdots&\vdots &\ddots&\vdots&\vdots &\vdots &\ddots &\vdots\\
		0&0 &\cdots &2\alpha &0 &0&\cdots &\beta\\
		\beta & 0 &\cdots&0&2&0&\cdots&0 \\ 
		0&\beta &\cdots&0&0&2&\cdots&0\\ 
		\vdots&\vdots &\ddots&\vdots&\vdots &\vdots &\ddots &\vdots\\
		0&0 &\cdots &\beta &0 &0&\cdots &2
	\end{pmatrix}.\\$ 

Now we compute 
	\begin{align*}
		\begin{aligned} 
			\mathscr{L}[V](\mathbf{x},t)=&V_t(\mathbf{x},t)+V_\mathbf{x}(\mathbf{x},t)f(\mathbf{x},t)+\frac{1}{2}trace[g^T(\mathbf{x},t)V_{\mathbf{x}\mathbf{x}}g(\mathbf{x},t)],\\
		\end{aligned}
	\end{align*}
 where 
\begin{align*}
    \begin{aligned} 
 	&f(\mathbf{x},t)=\!(v_t^1,\cdots,v_t^N,\frac{\kappa}{N}\!\sum_{j=1}^N \psi(|x_t^j\!-\!x_t^1|) (v_t^j\!-\!v_t^1) \!-\!x_t^1,\cdots,\frac{\kappa}{N}\!\sum_{j=1}^N \psi(|x_t^j\!-\!x_t^N|) (v_t^j\!-\!v_t^N)\!-\!x_t^N)^T,\\
 	&g(\mathbf{x},t)=(0,0,\cdots,0, \sqrt{2\sigma} v_t^1, \sqrt{2\sigma} v_t^2, \cdots,\sqrt{2\sigma}v_t^N)^T.
	\end{aligned}
\end{align*}
 We have
  	\begin{align}
	\begin{aligned} \label{sum1}
		\mathscr{L}[V](\mathbf{x},t)=& \frac{2\kappa}{N} \sum_{i=1}^N \sum_{j=1}^N \psi(|x_t^j-x_t^i|)(v_t^j-v_t^i)\cdot v_t^i+\frac{\kappa \beta}{N} \sum_{i=1}^N \sum_{j=1}^N \psi(|x_t^j-x_t^i|)(v_t^j-v_t^i)\cdot x_t^i\\
		&+(2\alpha-2) \sum\limits_{i=1}^{N}x_t^i\cdot v_t^i+(\beta+2\sigma )\sum\limits_{i=1}^{N}|v_t^i|^2-\beta\sum\limits_{i=1}^{N}|x_t^i|^2.\\
	\end{aligned}
\end{align}
	As $\psi$ is a symmetric function, the first term on the right-hand side of \eqref{sum1} can be treated by exchanging $i\leftrightarrow j$:
	\begin{align*}
		\begin{aligned} 
			&\frac{2\kappa}{N} \sum_{i=1}^N \sum_{j=1}^N \psi(|x_t^j-x_t^i|)(v_t^j-v_t^i)\cdot v_t^i\\
			=&-\frac{\kappa}{N}\sum\limits_{i,j=1}^N   \psi(|x_t^j-x_t^i|) |v_t^j-v_t^i|^2\\
			\le&- \frac{\kappa}{N}\psi_m \sum\limits_{i,j=1}^N |v_t^j-v_t^i| ^2\\
			=&- 2\kappa\psi_m \sum_{i=1}^N |v_t^i|^2,
		\end{aligned}
	\end{align*}
where we used $\sum\limits_{i=1}^N v_t^i=0$.
The second term on the right-hand side of \eqref{sum1} can be written:
	\begin{align*}
		\begin{aligned} 
			&\frac{\kappa\beta}{N} \sum_{i=1}^N \sum_{j=1}^N \psi(|x_t^j-x_t^i|)(v_t^j-v_t^i)\cdot x_t^i\\
			\le&\frac{\kappa\beta}{N}\sum\limits_{i,j=1}^N   \psi(|x_t^j-x_t^i|) |(v_t^j-v_t^i)\cdot x_t^i|\\
			\le&\frac{\kappa\beta}{N} \psi_M\sum\limits_{i,j=1}^N  \left[\frac{\kappa\psi_M}{2}|v_t^j-v_t^i|^2+\frac{|x_t^i|^2}{2\kappa\psi_M}\right]\\
			=& \kappa^2\psi_M^2\beta\sum\limits_{i=1}^N  |v_t^i|^2+\frac{\beta}{2}\sum\limits_{i=1}^N |x_t^i|^2,
		\end{aligned}
	\end{align*}
where we used $|(v_t^j-v_t^i)\cdot x_t^i|\le \left(\kappa\psi_M|v_t^j-v_t^i|^2+\frac{|x_t^i|^2}{\kappa\psi_M}\right)/2$.

	Therefore  we get the following estimate for $\mathscr{L}[V](\mathbf{x},t)$: 
	\begin{align*}
		\begin{aligned} 
			\mathscr{L}[V](\mathbf{x},t)
			\le&(2\alpha-2) \sum\limits_{i=1}^{N}x_t^iv_t^i-( 2\kappa\psi_m-2\sigma-(1+\kappa^2\psi_M^2)\beta)\sum\limits_{i=1}^{N}|v_t^i|^2-\frac{\beta}{2}\sum\limits_{i=1}^{N}|x_t^i|^2.
		\end{aligned}
	\end{align*}
	In order to convert $\mathscr{L}[V]$ to be negative-definite, we set $2\alpha-2=0,$ that is $\alpha=1.$	
	Then $\mathscr{L}[V]$ and $V$ become
	\begin{equation*}
		\mathscr{L}[V]\le-( 2\kappa\psi_m-2\sigma -(1+\kappa^2\psi_M^2)\beta)\sum\limits_{i=1}^{N}|v_t^i|^2-\frac{\beta}{2}\sum\limits_{i=1}^{N}|x_t^i|^2 \quad
	\end{equation*}
	and
	\begin{equation*}
		V(\mathbf{x},t)= \sum\limits_{i=1}^{N}|x_t^i|^2+ \beta \sum\limits_{i=1}^{N} x_t^i\cdot v_t^i+ \sum\limits_{i=1}^{N}|v_t^i|^2.
	\end{equation*}\\
	In order to make $\mathscr{L}[V]$ negative-definite, we set
	\begin{align*}
		\begin{aligned} 
			&0<\beta<\frac{2\kappa\psi_m-2\sigma}{1+\kappa^2\psi_M^2}.
		\end{aligned}
	\end{align*}

Since $|\beta|\le1$, the quantity $\sum\limits_{i=1}^{N}|x_t^i|^2+ \beta \sum\limits_{i=1}^{N} x_t^i\cdot v_t^i+ \sum\limits_{i=1}^{N}|v_t^i|^2$ is equivalent to $\sum\limits_{i=1}^{N}|x_t^i|^2+ \sum\limits_{i=1}^{N}|v_t^i|^2$:
\[\frac{3}{8} \left(\sum\limits_{i=1}^{N}|x_t^i|^2+\sum\limits_{i=1}^{N}|v_t^i|^2\right) \leq \sum\limits_{i=1}^{N}|x_t^i|^2+ \beta \sum\limits_{i=1}^{N} x_t^i\cdot v_t^i+ \sum\limits_{i=1}^{N}|v_t^i|^2 \leq \frac{3}{2}\left(\sum\limits_{i=1}^{N}|x_t^i|^2+\sum\limits_{i=1}^{N}|v_t^i|^2\right) .\]
 Then we have 
\begin{align*}
	\begin{aligned} 
	\mathscr{L}[V] \leq -a\left(\sum\limits_{i=1}^{N}|x_t^i|^2+  \sum\limits_{i=1}^{N}|v_t^i|^2\right) \leq -\frac{2a}{3}V,
	\end{aligned}
\end{align*}
where $a=\min\left\{2\kappa\psi_m-2\sigma -(1+\kappa^2\psi_M^2)\beta,\frac{\beta}{2}\right\}$.

By Corollary 3.4 in  book \cite{M},  we conclude that if  
	\begin{equation*}
		\kappa\psi_m>\sigma \quad and \quad 0<\beta<\min\left\{ 1,\frac{2\kappa\psi_m-2\sigma}{1+\kappa^2\psi_M^2 }\right\},
	\end{equation*}
	then we have
	\begin{equation*}
		\lim_{t \to \infty} \sup\frac{1}{t}\ln\left(\sum_{i=1}^N|x_t^i|+\sum_{i=1}^N|v_t^i|\right)\le -\frac{1}{3}a<0,\quad a.s.
	\end{equation*}
	Thus, we infer 
	  \begin{equation*} 
        \lim_{t \to \infty} (|x_t^i-x_t^j|+ |v_t^i-v_t^j|) \le  \lim_{t \to \infty} \left(\sum_{i=1}^N|x_t^i|+\sum_{i=1}^N|v_t^i|\right)\leq  \lim_{t \to \infty}e^{-\frac{1}{3}at}=0, \quad a.s.,
        \end{equation*}
          i.e., the system  \eqref{1} flocks and concentrates.
\end{proof}

\subsection{Numerical results for the microscopic system}
In this subsection, we give some numerical results  for the  microscopic dynamics  \eqref{1}  with two kinds of communication weight functions, and compare them with the analytic results in the last  subsection.

Firstly, we employ a constant communication weight function $\psi(r)=1$ and the parameters are $\kappa=100$, $  \sigma=200$. We solve the system for 100 particles by using Euler's method in two dimensional space.  The initial locations and velocities for system are randomly distributed in the interval [-50, 50]$\times$[-50, 50] and satisfy the conservation laws $\eqref{1}_3$. The result is shown in figure \ref{fig1}. 

Secondly, we employ the communication weight function \[\psi(r)=\frac{1}{(1+r^2)^{1/4}}.\]
We select initial configuration for 100 particles in the same way as in the constant communication weight function example. The other parameters are set as before, $\kappa=100$, $\sigma=200$. Figure \ref{fig2} shows all the realizations of the trajectories of $v^1$ and $x^1$.

\begin{figure}[htbp] 
	\centering
	\subfigure[Trajectories of the velocity $v^1$ for 100 particles.] 
	{\includegraphics[width=6cm]{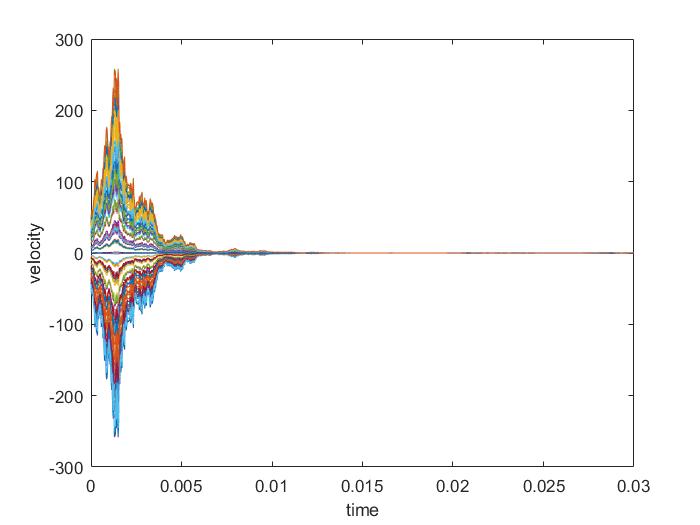}
	}
	\subfigure[Trajectories of the position $x^1$ for 100 particles.] 
	{\includegraphics[width=6cm]{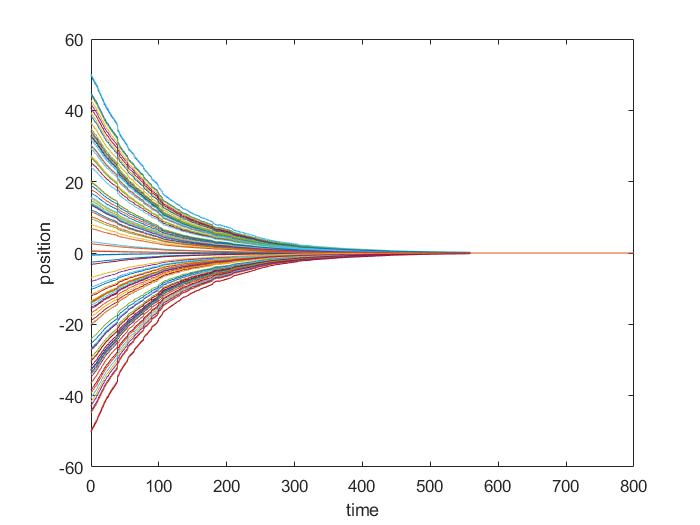}}
	\caption{ The  constant  communication weight function.}\label{fig1}
\end{figure}
\begin{figure}[h!] 
	\centering
	\subfigure[Trajectories of the velocity $v^1$ for 100 particles.] 
	{\includegraphics[width=6cm]{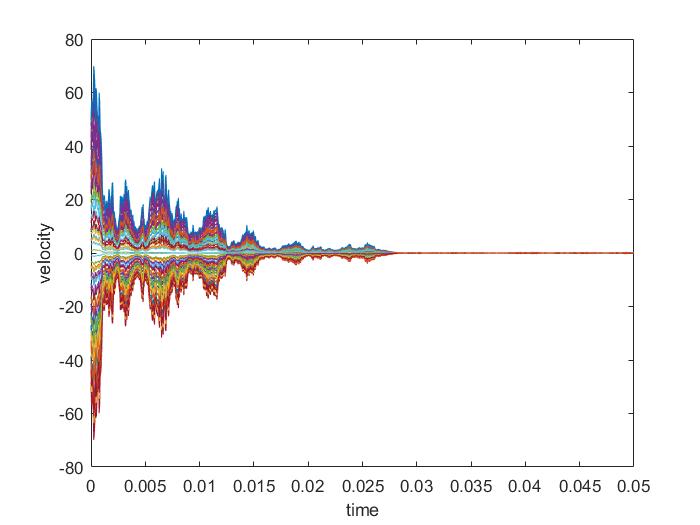}
	}
	\subfigure[Trajectories of the position $x^1$ for 100 particles.] 
{\includegraphics[width=6cm]{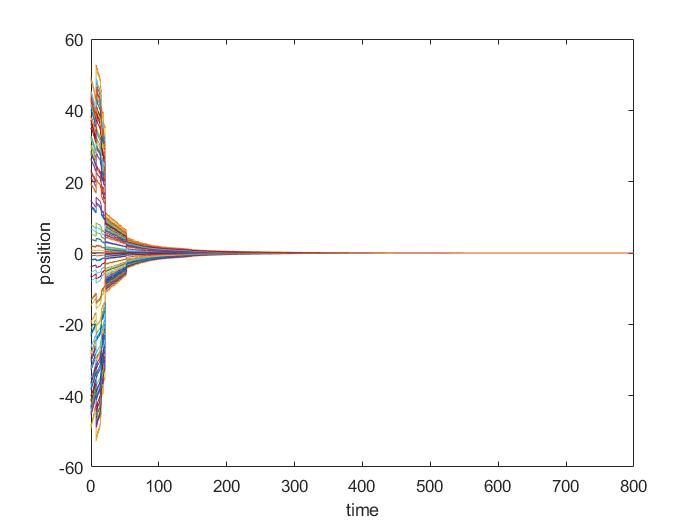}
}

\caption{The radially symmetric communication weight  function.}\label{fig2}
\end{figure}
From figure \ref{fig1} and figure \ref{fig2}, we find that as time goes to infinity, all particles move at the same position and velocity. Then we confirm  Theorem \ref{dthm}.


\section{Kinetic C-S Vlasov-type equation}\label{sec:3}
\setcounter{equation}{0}
  In this section, we consider the  existence  and asymptotic long-time behavior of solutions to the kinetic C-S Vlasov-type model in a harmonic field. 

\subsection{Existence of the global classical solution}
   Similar to \cite{J, HKR}, the global existence of classical solution to kinetic  C-S Vlasov-type model with noise can be established by introducing  a new weighted  Sobolev space.
  We state the main result without the proof.
  For a measurable function $f(x,v,t)$ in the phase space $\mathbb{R}^{2d}$, we set
  \begin{align*}
			\begin{aligned}
				&\|f\|_{L^1(x,v)}=\|(1+|x|^2+|v|^2)f\|_{L^1(\mathbb{R}^{2d})},\\ &\|f\|_{L_{\alpha,x,v}^2}=\int_{\mathbb{R}^{2d}}(1+|x|^2+|v|^2)^\alpha |f|^2 dxdv,\quad \alpha\ge 0, \\
				&\|f\|^2_{H_\alpha^k}:= \|f\|_{L_{\alpha,x,v}^2}+\sum\limits_{1\le i+j\le k}\|\partial_x^i\partial_v^j f\|_{L^2}^2,\quad k\in \mathbb{N}\cup\{0\}.
			\end{aligned} 
		\end{align*}

    We define a function space where we will look for a classical solution: for $T>0$,
   \[ \mathcal{X}_{k,\alpha}(T):=\{ f\in C(0,T; (H_\alpha^k\cap L^1_{(x,v)})(\mathbb{R}^{2d})):\sup\limits_{t\in[0,T)}(\|f(t)\|_{H_\alpha^k}+\|f(t)\|_{L^1_{(x,v)}})<\infty\}.\]
  \begin{theorem}
   Let $T\in(0,\infty)$ be a positive constant and we assume that the initial configuration $f_0$ satisfies 
   \begin{equation*}
   f_0\in H_{\alpha}^k\cap L^1_{(x,v)}, \ \  \mbox{for some positive constants}\ \  k>2+d,\ \ \alpha>\frac{d+2}{2}.
   \end{equation*}
    Then, there exists a unique global classical solution to the Cauchy problem of  \eqref{km}-\eqref{k1}  in the function space  $\mathcal{X}_{k,\alpha}(T)$.
  \end{theorem}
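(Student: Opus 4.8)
The plan is to construct the solution by a Picard-type iteration that freezes the nonlinear alignment operator $L[f]$ and the mean velocity $v_c$, to solve the resulting linear degenerate Fokker--Planck equation by weighted energy estimates, and then to upgrade the local fixed point to a global classical solution by a continuation argument, in the spirit of \cite{HKR,J}. First I would set up the linearization. Integrating \eqref{km} over $\mathbb{R}^{2d}$ shows that the mass $\int f\,dx\,dv$ is conserved, so for any $g\in\mathcal{X}_{k,\alpha}(T)$ carrying the same mass as $f_0$ one may define the frozen mean velocity $v_c[g](t)$ (the mass-normalized first velocity moment of $g$) and the frozen force $L[g]$, which is affine in $v$ with $(x,t)$-dependent coefficients. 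One then considers the linear problem
\begin{equation*}
\partial_t f + v\cdot\nabla_x f - x\cdot\nabla_v f + \nabla_v\cdot(L[g]\,f) = \sigma\,\Delta_v\!\left(|v-v_c[g]|^2 f\right), \qquad f(\cdot,\cdot,0)=f_0,
\end{equation*}
and denotes by $\mathcal{T}\colon g\mapsto f$ its solution map, so that a fixed point of $\mathcal{T}$ solves \eqref{km}--\eqref{k1}.

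For the linear problem I would regularize by adding an artificial viscosity $\varepsilon\Delta_x$ to restore uniform parabolicity, solve by standard weighted parabolic theory, and pass to the limit $\varepsilon\to 0$ with bounds independent of $\varepsilon$. The core is an energy estimate adapted to the norm of $H^k_\alpha$: test the equation against $(1+|x|^2+|v|^2)^\alpha f$ to control the weighted $L^2$ part, and for each $1\le i+j\le k$ apply $\partial_x^i\partial_v^j$ and test against $\partial_x^i\partial_v^j f$ to control the unweighted derivative norms. Here the confining harmonic structure is an asset rather than an obstruction: the transport field $v\cdot\nabla_x-x\cdot\nabla_v$ is divergence-free and annihilates the radial weight $1+|x|^2+|v|^2$, hence is skew-adjoint in both the weighted and the plain $L^2$ inner products and drops out of the leading energy identity, leaving only bounded lower-order commutators. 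Expanding $\Delta_v(|v-v_c|^2 f)=|v-v_c|^2\Delta_v f+4(v-v_c)\cdot\nabla_v f+2d\,f$ produces the favorable dissipation $-\sigma\int|v-v_c|^2|\nabla_v\partial_x^i\partial_v^j f|^2$; the commutators of $\partial_x^i\partial_v^j$ with the quadratic coefficient $|v-v_c|^2$ are absorbed into this dissipation by Young's inequality, while the linearly growing drifts arising from $L[g]$ and from $4(v-v_c)\cdot\nabla_v f$ integrate by parts into lower-order terms with bounded coefficients, controlled by $\|f\|_{H^k_\alpha}^2$. Summing and applying Gr\"onwall's inequality gives $\sup_{t\in[0,T)}\|f(t)\|_{H^k_\alpha}\le C$, with $C$ depending on $T$, $\|f_0\|_{H^k_\alpha}$ and $\|g\|_{\mathcal{X}_{k,\alpha}(T)}$, and a parallel computation propagates the weighted $L^1_{(x,v)}$ bound. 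This weighted estimate is the main obstacle: because the $v$-only diffusion degenerates at $v=v_c$ and its coefficient grows quadratically, one must check that every commutator generated by differentiating through the growing coefficients is matched by precisely the available dissipation, and it is the thresholds $\alpha>\frac{d+2}{2}$ (enough weighted integrability to bound the nonlocal $L[g]$, $v_c[g]$ and close the weight commutators) and $k>2+d$ (the algebra property of $H^k(\mathbb{R}^{2d})$) that make this bookkeeping close.

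Next I would close the fixed point. Using $\psi_m\le\psi\le\psi_M$ together with the moment control built into $\mathcal{X}_{k,\alpha}(T_*)$, which makes the first velocity moment, and hence $v_c[g]$ and the coefficients of $L[g]$, depend Lipschitz-continuously on $g$, one shows that for a suitable radius $R$ and a short time $T_*\le T$ the map $\mathcal{T}$ sends the ball $\{\,g:\|g\|_{\mathcal{X}_{k,\alpha}(T_*)}\le R\,\}$ into itself and is a contraction in the weaker norm $C(0,T_*;L^2_{\alpha,x,v})$, obtained by estimating $\mathcal{T}[g_1]-\mathcal{T}[g_2]$ through $L[g_1]-L[g_2]$ and $v_c[g_1]-v_c[g_2]$. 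The standard combination of contraction in the low norm with the uniform high-norm bound then yields a unique fixed point $f\in\mathcal{X}_{k,\alpha}(T_*)$, the local-in-time solution, and uniqueness in $\mathcal{X}_{k,\alpha}$ follows from the same difference estimate.

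Finally I would render the solution classical and global. Since $k>2+d$, the Sobolev embedding $H^k(\mathbb{R}^{2d})\hookrightarrow C^2(\mathbb{R}^{2d})$ shows $f(\cdot,\cdot,t)\in C^2$, so every derivative appearing in \eqref{km} exists in the classical sense and the equation holds pointwise. The only obstruction to extending the local solution to all of $[0,T)$ would be a finite-time blow-up of $\|f(t)\|_{\mathcal{X}_{k,\alpha}}$; but applying the estimate of the second step self-consistently with $g=f$ gives a closed differential inequality $\frac{d}{dt}\|f\|_{H^k_\alpha}^2\le C\,(1+\|f\|_{H^k_\alpha}^2)$ with $C$ depending only on $\kappa$, $\sigma$, $\psi_M$, $d$, $k$, $\alpha$ and the conserved mass and moments, so no such blow-up occurs and the solution extends to the arbitrary finite interval $[0,T)$.
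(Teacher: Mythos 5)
Your proposal is sound: the paper itself states this theorem \emph{without} proof, deferring to \cite{J} and \cite{HKR}, and your scheme --- freezing $L[g]$ and $v_c[g]$, weighted $H^k_\alpha$ energy estimates exploiting the skew-adjointness of $v\cdot\nabla_x - x\cdot\nabla_v$ (which annihilates the radial weight) together with the degenerate dissipation $-\sigma\int |v-v_c|^2|\nabla_v\partial_x^i\partial_v^j f|^2$, a low-norm contraction combined with the high-norm bound, and continuation --- is precisely the route those references take, so you have reconstructed the intended argument. One cosmetic caveat: the second moments are not conserved (they can grow exponentially when $\kappa$ is small, cf.\ the lower bound \eqref{nf}), but since mass is conserved and the moments are bounded on any finite interval independently of $\|f\|_{H^k_\alpha}$, your closing Gr\"onwall inequality still yields existence on every finite $[0,T)$ as claimed.
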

    \subsection{Flocking  and concentration  behavior for the kinetic model}
     In this subsection, we  derive a sufficient condition leading to a  exponential  flocking  and concentration results for the kinetic  model.  Firstly, we present  a lemma:
\begin{lemma}\label{le}
	Let $f=f(x,v,t)$ be a smooth solution to \eqref{km}-\eqref{k1} that quickly decays to zero at infinity. Then we have the following estimates:
	\begin{enumerate}[(1)]
		\item 	The total mass is conserved:
		\begin{align*}
			\begin{aligned}
				\frac{d}{dt}\int_{\mathbb{R}^{2d}} f dxdv = 0, \quad t>0.
			\end{aligned} 
		\end{align*}
		\item 
		 We define the means $x_c=\int_{\mathbb{R}^{2d}} xfdxdv/\int_{\mathbb{R}^{2d}} f dxdv$ and $v_c=\int_{\mathbb{R}^{2d}} vfdxdv/\int_{\mathbb{R}^{2d}} f dxdv$. 
They are governed by the harmonic oscillators:
		\begin{equation}\label{macro}
			\left\{\begin{split}
				& dx_c = v_cdt,\\
				& dv_c = - x_c dt.
			\end{split}\right.
		\end{equation}
	\end{enumerate} 	
\end{lemma}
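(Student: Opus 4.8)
The plan is to prove the two conservation/evolution statements by testing the kinetic equation \eqref{km} against the moments $1$, $x$, and $v$, integrating over the phase space $\mathbb{R}^{2d}$, and discarding boundary terms using the rapid decay hypothesis. For part (1), I would integrate \eqref{km} over $(x,v)$ and show every term on the right vanishes: the transport terms $v\cdot\nabla_x f$ and $-x\cdot\nabla_v f$ integrate to zero as divergences in $x$ and $v$ respectively; the alignment term $\nabla_v\cdot(L[f]f)$ is a pure $v$-divergence and hence integrates to zero; and the diffusion term $\sigma\Delta_v(|v-v_c|^2 f)$ is a double $v$-derivative, so it vanishes after integration against the constant test function $1$. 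This yields $\frac{d}{dt}\int f\,dxdv=0$.

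For part (2), the cleanest route is to first establish the evolution of the unnormalized first moments $\int xf\,dxdv$ and $\int vf\,dxdv$, and then use part (1) (constancy of total mass) to pass to the normalized means $x_c,v_c$. I would multiply \eqref{km} by $x$ and integrate: the term $v\cdot\nabla_x f$ contributes $-\int v f\,dxdv$ after integration by parts in $x$ (giving $\frac{d}{dt}\int xf = \int vf$), while the $v$-divergence terms (alignment, the $-x\cdot\nabla_v f$ piece, and the diffusion) all vanish against $x$ since $x$ is $v$-independent and these are $v$-derivatives. Dividing by the conserved mass gives $\dot x_c = v_c$, i.e. the first equation in \eqref{macro}.

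For the velocity mean, I would multiply \eqref{km} by $v$ and integrate. The transport term $v\cdot\nabla_x f$ integrates to zero (divergence in $x$, with $v$ constant in $x$). The diffusion term $\sigma\Delta_v(|v-v_c|^2 f)$ integrates to zero against the linear weight $v$ after two integrations by parts (the second derivative of $v$ is zero). The potential term $-x\cdot\nabla_v f$ integrates by parts in $v$ to give $+\int xf\,dxdv$, contributing $-x_c$ after normalization. The crucial point is that the alignment term must vanish: integrating $\nabla_v\cdot(L[f]f)$ against $v$ by parts yields $-\int L[f]f\,dxdv$, and by the explicit form of $L[f]$ this is $\kappa\int\int \psi(|x-y|)(v-v_\ast)f(y,v_\ast,t)f(x,v,t)\,dv_\ast dy\,dx dv$; by the symmetry $\psi(|x-y|)=\psi(|y-x|)$ and antisymmetry of $v-v_\ast$ under swapping the dummy pairs $(x,v)\leftrightarrow(y,v_\ast)$, this double integral cancels to zero.

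The main obstacle is bookkeeping rather than conceptual: one must keep careful track of which variable each divergence acts in, so that the test weight is correctly seen as constant in that variable and the boundary terms are legitimately killed by the decay assumption. The only genuinely substantive step is the cancellation of the alignment contribution to $\dot v_c$ via the symmetry of $\psi$ and the exchange of dummy integration variables; I would isolate and verify that symmetrization carefully, as it is the analogue of the momentum-conservation argument used at the particle level in \eqref{macro1}.
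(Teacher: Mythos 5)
Your proposal is correct and matches the paper's own argument in all essentials: both integrate the equation (written as a divergence in $x$ and $v$) against the moments $1$, $x$, and $v$, kill the pure $x$- and $v$-divergence terms and the diffusion term by integration by parts with rapid decay, and reduce the evolution of $v_c$ to the identity $\int_{\mathbb{R}^{2d}} L[f]f\,dxdv=0$, proved exactly as you describe by the symmetry of $\psi$ and the exchange of the dummy pairs $(x,v)\leftrightarrow(y,v_\ast)$. The only cosmetic difference is that the paper normalizes the mass to $1$ up front rather than dividing by the conserved mass at the end.
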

\begin{proof}
	Note that the equation in \eqref{km} can be rewritten in a divergent form: 
	\begin{align}\label{l1}
		\begin{split}
			&\partial_t f +  \nabla_{x} \cdot (vf) + \nabla_{v} \cdot (L[f]f-  xf-\sigma \nabla_{v}(|v-v_c|^2f))=0 .   \\
		\end{split}
	\end{align}
We integrate \eqref{l1} in $(x,v)$ to obtain the conservation of mass.

W.o.l.g., we  can assume that $\int_{\mathbb{R}^{2d}} f dxdv =1$ here. By the definition of $x_c$ and $v_c$, we have
	\begin{align*}
		\begin{split}
			&\frac{dx_c}{dt} = \int_{\mathbb{R}^{2d}} xf_tdxdv, \\
			&\frac{dv_c}{dt} = \int_{\mathbb{R}^{2d}} vf_tdxdv.
		\end{split}
	\end{align*}
We multiply $x$ and $v$ with \eqref{l1} respectively to get
	\begin{align}
		\begin{split}\label{r1}
			&\partial_t (xf) +  \nabla_{x} \cdot (x \otimes vf) - vf + \nabla_{v} \cdot \left[x\otimes \left(L[f]f-  xf-\sigma \nabla_{v}(|v-v_c|^2f)\right)\right]=0 
		\end{split}
	\end{align}
and
\begin{align}
	\begin{split}\label{r2}
		&\partial_t (vf) +\! \nabla_{x} \!\cdot (v\otimes v f) + \nabla_{v} \cdot\left [v \otimes L[f]f\! -\sigma v\otimes \nabla_{v}(|v-v_c|^2f)-x\otimes vf\right]+ xf\!\\
		&\quad \quad \quad \quad \qquad \quad \quad \quad \quad 
		=L[f]f-\sigma\nabla_{v}(|v-v_c|^2f). 
	\end{split}
\end{align}
Then we integrate the relation in \eqref{r1}-\eqref{r2} and use
\begin{align*}
	\begin{split}
		&\int_{\mathbb{R}^{2d}}L[f]fdvdx = -\kappa \int_{\mathbb{R}^{4d}}\psi(|x-y|)(v-v_*)f(y,v_*,t)f(x,v,t)dvdv_*dydx=0
	\end{split}
\end{align*}
to obtain \eqref{macro}.
\end{proof}
 
To prove a flocking and concentration estimate for the kinetic model, we introduce a Lyapunov functional
	\begin{equation}\label{L}
		\tilde{\mathcal L}[f](t)
		:=\int_{\mathbb{R}^{2d}}\left(\frac{1}{2}|x-x_c|^2+\frac{1}{2}|v-v_c|^2+\epsilon (x-x_c)\cdot(v-v_c)\right)fdxdv, \quad |\epsilon|\le \frac{1}{2}.
	\end{equation} 
It is equivalent to the standard  Lyapunov variance functional measuring the  position and velocity  variances of the kinetic density function $f$:
 \begin{equation}\label{L1}
		\mathcal L[f](t)
		:=\int_{\mathbb{R}^{2d}}\left(|x-x_c|^2+|v-v_c|^2\right)fdxdv, 
	\end{equation} 
i.e., $\frac{3}{16}\mathcal L[f](t)\le\tilde{\mathcal L}[f](t)\le\frac{3}{4}\mathcal L[f](t).$
\begin{theorem} \label{u4}
	Suppose that the communication weight function $\psi$ satisfies the following positivity and boundedness conditions:  there exist positive constants $\psi_M$ and $\psi_m$ such that 
	\begin{align*}
		\begin{aligned}
		0 < \psi_m \leq	\psi(s) \leq \psi_M, \quad s\geq0,
		\end{aligned} 
	\end{align*} 
	and let $f=f(x,v,t)$ be a classical solution to \eqref{km} that quickly decays to zero at infinity and satisfies the finite second moments
	\begin{align*}
		\begin{aligned}
			\int_{\mathbb{R}^{2d}}(1+|v-v_c|^2+|x-x_c|^2)f(x,v,t)dxdv<\infty, \quad t\geq0.
		\end{aligned} 
	\end{align*} 
	Then, the following estimates hold:\\
	\begin{enumerate}[(1)]
		\item 	If $\kappa>\frac{d\sigma}{\psi_m||f_0||_{L^1}}$, there exists a positive constant $C_m:=\min \left\{ \frac{1}{4},\frac{\kappa\psi_m||f_0||_{L^1}-d\sigma}{4(1+2(\kappa\psi_M)^2)} \right\}$, 
		such that
		\begin{align}  \label{mkf}
			\begin{aligned} 
				&\mathcal L[f](t) \leq 4\mathcal L[f_0]e^{-\frac{4}{3}C_mt} .
			\end{aligned} 
		\end{align}
		\item 	If $\kappa<\frac{d\sigma}{\psi_M||f_0||_{L^1}}$, there exists a positive constant $C_M: = \min \left\{\frac{1+2(\kappa \psi_M)^2}{2} ,\frac{d\sigma-\kappa\psi_M||f_0||_{L^1}}{2} \right\}$,
		such that
		\begin{align} \label{nf}
			\begin{aligned}
				&\mathcal L[f](t) \geq\frac{1}{4}\mathcal L[f_0]e^{\frac{4}{3}C_Mt} .
			\end{aligned} 
		\end{align}
	\end{enumerate} 
\end{theorem}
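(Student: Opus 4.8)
The plan is to differentiate the auxiliary functional $\tilde{\mathcal L}[f]$ of \eqref{L} in time and reduce its derivative to the three variance quantities
\[
X(t):=\int_{\mathbb{R}^{2d}}|x-x_c|^2 f\,dx\,dv,\quad Y(t):=\int_{\mathbb{R}^{2d}}|v-v_c|^2 f\,dx\,dv,\quad Z(t):=\int_{\mathbb{R}^{2d}}(x-x_c)\cdot(v-v_c)\,f\,dx\,dv,
\]
so that $\mathcal L[f]=X+Y$ and $\tilde{\mathcal L}[f]=\tfrac12 X+\tfrac12 Y+\epsilon Z$, and then to close a Gr\"onwall inequality. First I would compute $\tfrac{d}{dt}\tilde{\mathcal L}[f]$ by testing the divergence form \eqref{km} against each of the three weights $\tfrac12|x-x_c|^2$, $\tfrac12|v-v_c|^2$ and $\epsilon(x-x_c)\cdot(v-v_c)$, keeping track of the explicit time dependence of the weights through $x_c(t),v_c(t)$, whose dynamics is the harmonic oscillator \eqref{macro} supplied by Lemma \ref{le}. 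After integrating the diffusion term by parts twice in $v$ (so that $\sigma\Delta_v$ acting on the quadratic velocity weight contributes $d\sigma Y$, and nothing from the two linear-in-$v$ weights) and using $\int(x-x_c)f=\int(v-v_c)f=0$, the $\pm Z$ contributions coming from $\dot X$ and $\dot Y$ cancel and I expect the identity
\[
\frac{d}{dt}\tilde{\mathcal L}[f]=\int_{\mathbb{R}^{2d}}L[f]\cdot(v-v_c)\,f\,dx\,dv+\epsilon\int_{\mathbb{R}^{2d}}L[f]\cdot(x-x_c)\,f\,dx\,dv+(d\sigma+\epsilon)Y-\epsilon X .
\]

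The two alignment integrals are then treated separately. For $\int L[f]\cdot(v-v_c)f$ I would symmetrize under $(x,v)\leftrightarrow(y,v_*)$ using the symmetry of $\psi$, exactly as in the particle proof of Theorem \ref{dthm}, rewriting it as $-\tfrac{\kappa}{2}\iint\psi(|x-y|)|v-v_*|^2 f f$; combined with the moment identity $\iint|v-v_*|^2 f f=2\|f_0\|_{L^1}Y$ (from $\int(v-v_c)f=0$ and the mass conservation of Lemma \ref{le}) and the bounds $\psi_m\le\psi\le\psi_M$, this gives $-\kappa\psi_M\|f_0\|_{L^1}Y\le\int L[f]\cdot(v-v_c)f\le-\kappa\psi_m\|f_0\|_{L^1}Y$. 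The genuinely new object is the position--velocity cross alignment $\int L[f]\cdot(x-x_c)f$, which does \emph{not} symmetrize; I would estimate it by Young's inequality $|(v-v_*)\cdot(x-x_c)|\le\tfrac{1}{2\mu}|v-v_*|^2+\tfrac{\mu}{2}|x-x_c|^2$, which together with the same two moment identities yields a bound $\kappa\psi_M\|f_0\|_{L^1}\big(\tfrac{1}{\mu}Y+\tfrac{\mu}{2}X\big)$ with a free Young weight $\mu>0$.

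Substituting these bounds into the identity for $\tfrac{d}{dt}\tilde{\mathcal L}[f]$ gives, in case (1), an estimate of the form $\tfrac{d}{dt}\tilde{\mathcal L}[f]\le -c_Y Y-c_X X$, where $c_Y$ is controlled by the dissipative gap $\kappa\psi_m\|f_0\|_{L^1}-d\sigma$ (positive under the hypothesis) while $c_X$ is generated by the $-\epsilon X$ term. The crux is to choose the pair $(\epsilon,\mu)$, subject to $|\epsilon|\le\tfrac12$, so that both coefficients dominate $C_m$: taking the Young weight $\mu\sim(2\kappa\psi_M\|f_0\|_{L^1})^{-1}$ converts the cross contribution into $\epsilon(1+2(\kappa\psi_M)^2)$ on the $Y$-side (under the normalization $\|f_0\|_{L^1}=1$ of Lemma \ref{le}), which is precisely the denominator appearing in $C_m$, while leaving a clean fraction of $\epsilon$ on the $X$-side; the competing constraint $|\epsilon|\le\tfrac12$ is what forces the $\tfrac14$ alternative in the definition of $C_m$. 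This produces $\tfrac{d}{dt}\tilde{\mathcal L}[f]\le -C_m\mathcal L[f]$, and the lower half $\mathcal L\ge\tfrac43\tilde{\mathcal L}$ of the equivalence $\tfrac{3}{16}\mathcal L\le\tilde{\mathcal L}\le\tfrac34\mathcal L$ upgrades this to $\tfrac{d}{dt}\tilde{\mathcal L}[f]\le-\tfrac43 C_m\tilde{\mathcal L}[f]$; Gr\"onwall together with both halves of the equivalence then yields $\mathcal L[f](t)\le4\mathcal L[f_0]e^{-\frac43 C_m t}$, the loss factor $4=\tfrac{16}{3}\cdot\tfrac34$ being exactly the ratio of the equivalence constants. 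Case (2) is the mirror image: I would use the lower alignment bound with $\psi_M$, reverse all inequalities, and choose the sign of $\epsilon$ so that $\tfrac{d}{dt}\tilde{\mathcal L}[f]\ge C_M\mathcal L[f]\ge\tfrac43 C_M\tilde{\mathcal L}[f]$ when $\kappa<d\sigma/(\psi_M\|f_0\|_{L^1})$, concluding by the reverse Gr\"onwall inequality. The main obstacle throughout is exactly the non-symmetrizable cross alignment term forced on us by the confining harmonic field: it is to absorb this term that the cross contribution $\epsilon(x-x_c)\cdot(v-v_c)$ was built into $\tilde{\mathcal L}$ in the first place, since the bare variance functional $\mathcal L$ satisfies $\tfrac{d}{dt}\int|x-x_c|^2 f=2Z$ and thus carries no dissipation in position, so that the naive Gr\"onwall argument cannot close.
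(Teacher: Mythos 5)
Your proposal is correct and follows essentially the same route as the paper: differentiating $\tilde{\mathcal L}[f]$ termwise, symmetrizing the velocity alignment to $-\tfrac{\kappa}{2}\iint\psi|v-v_\ast|^2 f f$, absorbing the non-symmetrizable cross alignment $\int L[f]\cdot(x-x_c)f$ via Young's inequality into the $\epsilon$-cross term, choosing $\epsilon$ exactly as in \eqref{1.10} and \eqref{2.4}, and closing with Gr\"onwall through the equivalence $\tfrac{3}{16}\mathcal L\le\tilde{\mathcal L}\le\tfrac34\mathcal L$, which produces the same factor $4$ and rates $\tfrac43 C_m$, $\tfrac43 C_M$. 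The only cosmetic difference is that you apply Young's inequality with a single free weight $\mu$ to $(x-x_c)\cdot(v-v_\ast)$ and then use $\iint|v-v_\ast|^2ff=2\|f_0\|_{L^1}Y$, whereas the paper first splits $v-v_\ast=(v-v_c)+(v_c-v_\ast)$ in \eqref{1.7} and \eqref{2.1}; the resulting coefficients are the same.
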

\begin{proof}
		We estimate  each component in  \eqref{L}:\newline
	\noindent $\bullet$ (Estimate for $\int_{\mathbb{R}^{2d}}|x-x_c|^2fdxdv$):~
	By straightforward calculations, we get
	\begin{align}
		\begin{aligned}  \label{1.1}
			\frac{d}{dt}\left( \int_{\mathbb{R}^{2d}}|x-x_c|^2fdxdv\right)&=-2\int_{\mathbb{R}^{2d}}(x-x_c)\cdot \frac{dx_c}{dt}fdxdv+\int_{\mathbb{R}^{2d}}|x-x_c|^2f_tdxdv\\
			&= \int_{\mathbb{R}^{2d}}|x-x_c|^2f_tdxdv,\\
		\end{aligned} 
	\end{align}
	where the first term in \eqref{1.1} vanished by the definition of $x_c$ in Lemma \ref{le}. Therefore \eqref{1.1} can be estimated as follows:
	\begin{align}\label{l}
		\begin{aligned}
			\frac{d}{dt}\left( \int_{\mathbb{R}^{2d}}|x-x_c|^2fdxdv\right)&=\int_{\mathbb{R}^{2d}}|x-x_c|^2f_tdxdv=2\int_{\mathbb{R}^{2d}}(x-x_c) \cdot vfdxdv\\
			& =2\int_{\mathbb{R}^{2d}}(x-x_c) \cdot (v-v_c)fdxdv,
		\end{aligned}
	\end{align} 
	where we used $\int_{\mathbb{R}^{2d}}(x-x_c)fdxdv=0$.
	\newline 
	\noindent $\bullet$ (Estimate for $\int_{\mathbb{R}^{2d}}|v-v_c|^2fdxdv$):~We calculate the derivative of it in a direct way to obtain
	\begin{align}
		\begin{aligned} \label{1.2}
			&\frac{d}{dt}\left( \int_{\mathbb{R}^{2d}}|v-v_c|^2fdxdv\right)= \int_{\mathbb{R}^{2d}}|v-v_c|^2f_tdxdv\\
			&=2\!\!\int_{\mathbb{R}^{2d}}\!(v\!-\!v_c)\cdot (L[f]f)dxdv\!+\!2d\sigma\!\!\int_{\mathbb{R}^{2d}}\!|v\!-\!v_c|^2fdxdv\!-\!2\int_{\mathbb{R}^{2d}}\!(x\!-\!x_c)\! \cdot\! (v\!-\!v_c)fdxdv,
		\end{aligned} 
	\end{align} 
where we used $\int_{\mathbb{R}^{2d}}(v-v_c)fdxdv=0$. 
Note that the first term on the R. H. S. of \eqref{1.2} can be estimated as follows:
	\begin{align}
		\begin{aligned}  \label{1.3}
			&\int_{\mathbb{R}^{2d}}(v-v_c) \cdot (L[f]f)dxdv\\
			&=-\kappa\int_{\mathbb{R}^{4d}}\psi(|x-y|)(v-v_c) \cdot (v-v_\ast)f(y,v_\ast,t)f(x,v,t)dv_\ast dvdydx\\
			&=\kappa\int_{\mathbb{R}^{4d}}\psi(|x-y|)(v_\ast-v_c) \cdot (v-v_\ast)f(y,v_\ast,t)f(x,v,t)dv_\ast dvdydx\\
			&=-\frac{\kappa}{2}\int_{\mathbb{R}^{4d}}\psi(|x-y|)|v-v_\ast|^2 f(y,v_\ast,t)f(x,v,t)dv_\ast dvdydx.\\
		\end{aligned} 
	\end{align} 
	Then, we combine  \eqref{1.2} and  \eqref{1.3} to obtain:
	\begin{align}
		\begin{aligned}  \label{1.4}
			&\frac{d}{dt}\left( \int_{\mathbb{R}^{2d}}|v-v_c|^2fdxdv\right)\\
			& =-\kappa\int_{\mathbb{R}^{4d}}\psi(|x-y|)|v-v_\ast|^2 f(y,v_\ast,t)f(x,v,t)dv_\ast dvdydx\\
			&\quad +2d\sigma\int_{\mathbb{R}^{2d}}|v-v_c|^2fdxdv-2\int_{\mathbb{R}^{2d}}(x-x_c) \cdot (v-v_c)fdxdv.
		\end{aligned} 
	\end{align} 
	\noindent $\bullet$ (Estimate for $\int_{\mathbb{R}^{2d}}(x-x_c)\cdot (v-v_c)fdxdv$):~We calculate the cross term directly:
	\begin{align}
		\begin{aligned}   \label{1.6}
			&\frac{d}{dt}\left( \int_{\mathbb{R}^{2d}}(x-x_c)\cdot(v-v_c)fdxdv\right)= \int_{\mathbb{R}^{2d}}(x-x_c)\cdot(v-v_c)f_tdxdv\\
			&=\int_{\mathbb{R}^{2d}}|v-v_c|^2fdxdv -\int_{\mathbb{R}^{2d}}|x-x_c|^2fdxdv+\int_{\mathbb{R}^{2d}}(x-x_c)\cdot L[f]fdxdv,
		\end{aligned} 
	\end{align} 
	where we used $\int_{\mathbb{R}^{2d}}(x-x_c)fdxdv=0$ and $\int_{\mathbb{R}^{2d}}(v-v_c)fdxdv=0$.
	The third term on the R.H.S. of \eqref{1.6} can be estimated as follows:
	\begin{align}
		\begin{aligned}  \label{1.7}
			&\int_{\mathbb{R}^{2d}}(x-x_c) \cdot (L[f]f)dxdv\\
			& =-\kappa\int_{\mathbb{R}^{4d}}\psi(|x-y|)(x-x_c) \cdot (v-v_\ast)f(y,v_\ast,t)f(x,v,t)dv_\ast dvdydx\\
			& =-\kappa\int_{\mathbb{R}^{4d}}\psi(|x-y|)(x-x_c) \cdot (v-v_c)f(y,v_\ast,t)f(x,v,t)dv_\ast dvdydx\\
			&\quad-\kappa\int_{\mathbb{R}^{4d}}\psi(|x-y|)(x-x_c) \cdot (v_c-v_\ast)f(y,v_\ast,t)f(x,v,t)dv_\ast dvdydx\\
			& \leq \kappa \psi_M\int_{\mathbb{R}^{4d}}\left(\frac{|x-x_c|^2}{4\kappa\psi_M}  + \kappa \psi_M|v-v_c|^2\right) f(y,v_\ast,t)f(x,v,t)dv_\ast dvdydx\\
			&\quad+ \kappa \psi_M\int_{\mathbb{R}^{4d}}\left(\frac{|x-x_c|^2}{4\kappa\psi_M}  + \kappa\psi_M|v_c-v_\ast|^2\right)f(y,v_\ast,t)f(x,v,t)dv_\ast dvdydx\\
			&=\frac{1}{2}\int_{\mathbb{R}^{2d}}|x-x_c|^2f(x,v,t)dxdv+ 2(\kappa\psi_M)^2\int_{\mathbb{R}^{2d}}|v-v_c|^2f(x,v,t)dxdv,
		\end{aligned} 
	\end{align} 
	where we used $(x-x_c) \cdot (v-v_c) \leq \frac{|x-x_c|^2}{4\kappa\psi_M}  + \kappa\psi_M|v-v_c|^2 $ and $(x-x_c) \cdot (v_c-v_\ast) \leq \frac{|x-x_c|^2}{4\kappa\psi_M}  + \kappa\psi_M|v_c-v_\ast|^2 $.
	Then, we combine  \eqref{1.6} and  \eqref{1.7} to obtain an estimate for the cross term:
	\begin{align}
		\begin{aligned}   \label{1.8}
			&\frac{d}{dt}\left( \int_{\mathbb{R}^{2d}}(x-x_c)\cdot(v-v_c)fdxdv\right)\\
			&\leq (1+2(\kappa\psi_M)^2)\int_{\mathbb{R}^{2d}}|v-v_c|^2fdxdv -\frac{1}{2}\int_{\mathbb{R}^{2d}}|x-x_c|^2fdxdv.
		\end{aligned} 
	\end{align}

	Finally, we take $\frac{1}{2} \eqref{l}+\frac{1}{2}\eqref{1.4} +\epsilon\eqref{1.8}$ to obtain a differential inequality for $\tilde{\mathcal L}[f](t)$:
	\begin{align}
		\begin{aligned} \label{1.9}
			&\frac{d\tilde{\mathcal L}[f](t)}{dt}
			=\frac{d}{dt}\int_{\mathbb{R}^{2d}}\left(\frac{1}{2}|x-x_c|^2+\frac{1}{2}|v-v_c|^2+\epsilon (x-x_c)\cdot(v-v_c)\right)fdxdv\\
			&\leq\!-(\kappa\psi_m||f_0||_{L^1}\!-\!d\sigma\!-\!(1+2(\kappa\psi_M)^2)\epsilon)\!\int_{\mathbb{R}^{2d}} |v\!-\!v_c|^2 fdxdv\!-\!\frac{\epsilon}{2}\!\int_{\mathbb{R}^{2d}} |x\!-\!x_c|^2 fdxdv,
		\end{aligned} 
	\end{align}
where we used $\int_{\mathbb{R}^{2d}}(v-v_c)fdxdv=0$ and  $|v-v_\ast|^2=|v-v_c+v_c-v_\ast|^2$.
	One can simplify \eqref{1.9} by taking $\epsilon=\min \{\frac{1}{2},\frac{\kappa\psi_m||f_0||_{L^1}-d\sigma}{2(1+2(\kappa\psi_M)^2)}\}$:
	\begin{align}
		\begin{aligned} \label{1.10}
			&\frac{d}{dt}\int_{\mathbb{R}^{2d}}\left(\frac{1}{2}|x-x_c|^2+\frac{1}{2}|v-v_c|^2+\epsilon (x-x_c) \cdot (v-v_c)\right)fdxdv\\
			&\leq -C_m \int_{\mathbb{R}^{2d}} (|x-x_c|^2+|v-v_c|^2) fdxdv ,
		\end{aligned} 
	\end{align}
	where $C_m=\min \{ \frac{\epsilon}{2},\frac{\kappa\psi_m||f_0||_{L^1}-d\sigma}{2}\}$.
	In \eqref{1.10}, we  apply the Gr\"onwall's inequality to get
	\begin{align*}
		\begin{aligned}
			&\tilde{\mathcal L}[f](t) \leq \tilde{\mathcal L}[f_0]e^{-\frac{4}{3}C_mt}
		\end{aligned} 
	\end{align*}
	and  conclude the first result \eqref{mkf}.
	
	On the other hand, we derive
	\begin{align}
		\begin{aligned}  \label{2.1}
			&\int_{\mathbb{R}^{2d}}(x-x_c) \cdot (L[f]f)dxdv\\
			&=-\kappa\int_{\mathbb{R}^{4d}}\psi(|x-y|)(x-x_c) \cdot (v-v_c)f(y,v_\ast,t)f(x,v,t)dv_\ast dvdydx\\
			&\quad -\kappa\int_{\mathbb{R}^{4d}}\psi(|x-y|)(x-x_c) \cdot (v_c-v_\ast)f(y,v_\ast,t)f(x,v,t)dv_\ast dvdydx\\
			&\geq -\kappa \psi_M\int_{\mathbb{R}^{4d}}\left[\kappa\psi_M|x-x_c|^2  + \frac{|v-v_c|^2}{4\kappa\psi_M}\right]f(y,v_\ast,t)f(x,v,t)dv_\ast dvdydx\\
			&\quad - \kappa \psi_M\int_{\mathbb{R}^{4d}}\left[\kappa\psi_M|x-x_c|^2  + \frac{|v_c-v_\ast|^2}{4\kappa\psi_M}\right] f(y,v_\ast,t)f(x,v,t)dv_\ast dvdydx\\
			&=-2(\kappa \psi_M)^2\int_{\mathbb{R}^{2d}}|x-x_c|^2f(x,v,t)dxdv-\frac{1}{2} \int_{\mathbb{R}^{2d}}|v-v_c|^2f(x,v,t)dxdv.
		\end{aligned} 
	\end{align} 
	Then, we combine  \eqref{1.6} and  \eqref{2.1} to get an estimate for the cross term:
	\begin{align}
		\begin{aligned}   \label{2.2}
			&\frac{d}{dt}\left( \int_{\mathbb{R}^{2d}}(x-x_c)\cdot(v-v_c)fdxdv\right)\\
			&\geq \frac{1}{2}\int_{\mathbb{R}^{2d}}|v-v_c|^2fdxdv -(1+2(\kappa \psi_M)^2)\int_{\mathbb{R}^{2d}}|x-x_c|^2fdxdv.
		\end{aligned} 
	\end{align}
	Finally, we take $\frac{1}{2}\eqref{l}+\frac{1}{2}\eqref{1.4}+\epsilon\eqref{2.2}$ to obtain a differential inequality $\tilde{\mathcal L}[f](t)$:
	\begin{align}
		\begin{aligned} \label{2.3}
			&\frac{d\tilde{\mathcal L}[f](t)}{dt}
			=\frac{d}{dt}\int_{\mathbb{R}^{2d}}\left(\frac{1}{2}|x-x_c|^2+\frac{1}{2}|v-v_c|^2+\epsilon (x-x_c)\cdot(v-v_c)\right)fdxdv\\
			&\geq-\left(\kappa\psi_M||f_0||_{L^1}-d\sigma-\frac{1}{2}\epsilon\right)\!\!\int_{\mathbb{R}^{2d}} |v-v_c|^2 fdxdv
			-(1+2(\kappa \psi_M)^2)\epsilon\int_{\mathbb{R}^{2d}} |x-x_c|^2 fdxdv.
		\end{aligned} 
	\end{align}
	One can simplify \eqref{2.3} by taking $\epsilon=\max \{-\frac{1}{2},\kappa\psi_M||f_0||_{L^1}-d\sigma\}$:
	\begin{align}
		\begin{aligned} \label{2.4}
			&\frac{d}{dt}\int_{\mathbb{R}^{2d}}\left(\frac{1}{2}|x-x_c|^2+\frac{1}{2}|v-v_c|^2+\epsilon (x-x_c) \cdot (v-v_c)\right)fdxdv\\
			& \geq C_M \int_{\mathbb{R}^{2d}} (|x-x_c|^2+|v-v_c|^2) fdxdv,
		\end{aligned} 
	\end{align}
	where $C_M=\min \{ -(1+2(\kappa \psi_M)^2)\epsilon,\frac{d\sigma-\kappa\psi_M||f_0||_{L^1}}{2}\}>0$.
	In \eqref{2.4}, we apply the Gr\"onwall's inequality to get
	\begin{align*}
		\begin{aligned} 
			&\tilde{\mathcal L}[f](t) \geq \tilde{\mathcal L}[f_0]e^{\frac{4}{3}C_Mt}
		\end{aligned} 
	\end{align*}
	 and conclude the second result \eqref{nf}.
\end{proof}

\section{Mean-field limit: from stochastic particle system to the kinetic C-S Vlasov-type equation}\label{sec:4}
\setcounter{equation}{0}
In this section, we present a uniform-in-time mean-field limit from the stochastic  C-S model to the kinetic C-S Vlasov-type equation in a large population  limit $N\to\infty$ in the whole time-interval $[0,\infty)$, 
using  the so called C-S McKean process. Note that the Galilean invariance is hold because of harmonic oscillators \eqref{macro1} or \eqref{macro}.   Similar to  \cite{RE}, it is sufficient to study with $(x_c(0),v_c(0))=(0,0)$,  which implies  $(x_c(t),v_c(t))=(0,0)$.
Recall  that the stochastic particle model is
\begin{align}
	\left\{\begin{aligned}\label{SCS}
		&dx_t^i =v_t^i dt,   \qquad  t>0,  \quad  1 \leq i \leq N,  \\
		&dv_t^i = -\frac{\kappa}{N}\sum\limits_{j=1}^N \psi(|x_t^j-x_t^i|) (v_t^i-v_t^j) dt-x_t^idt+\sqrt{2\sigma} v_t^i dW_t,\\
		&\sum\limits_{i=1}^N x_t^i=0,\quad \sum\limits_{i=1}^N v_t^i=0. 
	\end{aligned}\right.
\end{align}
By symmetry of the initial configurations and the pairwise interaction of particles, all particles have the same distribution on $\mathbb{R}^{2d}$ at time $t$, which will be denoted  $f^{1,N}$.
 
 To study the behavior  of the  stochastic C-S model  \eqref{SCS}  for a large  population $N\gg1$, we use the concept of  ``propagation of chaos" originating from Kac's Markovian model of  gas dynamics.
 The propagation of chaos refers to the phenomenon that for any finite number of particles, each particles follow the  ``McKean process" $(\bar{x}_t^i,\bar{v}_t^i, f)$, which is given by the solution of the following  symmetric particle system
 \begin{align}
	\left\{\begin{aligned}\label{MK1}
		&d\bar{x}_t^i =\bar{v}_t^i dt,   \qquad  t>0,  \quad  1 \leq i \leq N,  \\
		&d\bar{v}_t^i = -\kappa  \psi*f(\bar{x}_t^i,\bar{v}_t^i) dt-\bar{x}_t^idt+\sqrt{2\sigma} \bar{v}_t^i dW_t,\\
		&f=law(\bar{x}_t^i,\bar{v}_t^i),\quad (\bar{x}_t^i(0),\bar{v}_t^i(0))=(x_0^i,v_0^i),
	\end{aligned}\right.
\end{align}
where $\psi*f(x,v):=\int_{\mathbb{R}^{2d}} \psi(|x-y|)(v-v_\ast) f(y,v_\ast)dydv_\ast$ is the convolution between $\psi$ and $f$ in the phase space.   By the way, when the  McKean process acts on the empirical measure $\mu^N_t=\frac{1}{N}\sum\limits_{i=1}^{N}\delta_{(x_t^i ,v_t^i)}$, the system  \eqref{MK1}  can be reduced  to the stochastic system \eqref{SCS}.

The system \eqref{MK1} consists of $N$ equations which can be solved  independently of each other. Each of them involves the condition that $f$ is the distribution of $(\bar{x}_t^i,\bar{v}_t^i)$, thus making it nonlinear.
The system  \eqref{MK1} and the system \eqref{SCS}  have the same initial configurations and share the same Brownian motions.  Note that the system  \eqref{MK1} is not anymore an SDE system, and the dynamics between particles is now coupled through the law $f$. The law is the same for each particle. It is straightforward to check that $f$ is just the (weak) solution to the mean-field PDE model  \eqref{km} by It\^{o}'s  formula. 
According to \cite{BC, HKR},   in order to verify the weak convergence from the empirical measure $\mu^N_t$  to $f$, i.e.   the mean-field limit, it is sufficient  to show that for any $t$,
 $\lim\limits_{N\to\infty}(E[|x_t^i-\bar{x}_t^i|^2]+E[|v_t^i-\bar{v}_t^i|^2])=0$, under the assumption of the well-posedness of both SDE and PDE systems.
   \begin{remark}
      The estimate $\lim\limits_{N\to\infty}(E[|x_t^i-\bar{x}_t^i|^2]+E[|v_t^i-\bar{v}_t^i|^2])=0$ classically  ensures quantitative estimates on ( see \cite{BC, HKR} for details)
         \begin{enumerate}
        \item 
      The convergence in $N$ of the law  $f^{1,N}$ at time $t$ of any (by symmetry) of the processes $(x_t^i ,v_t^i)$ towards $f$;
        \item 
         The propagation of chaos: for all fixed $k$, the law  $f^{k,N}$  for any $k$ particles $(x_t^i ,v_t^i)$  converges to the tensor  product $f^{\otimes k}$ as $N$ tends to infinity;
       \item 
         The convergence of the empirical measure $\mu^N_t$ at time $t$ of the particle system \eqref{SCS}  towards $f$.
         \end{enumerate}
    \end{remark}
    \subsection{Exponential decay in time estimate for the  limit process}
    In this subsection, we derive a uniform-in-time boundedness  and decay property for the difference between  two processes \eqref{SCS} and \eqref{MK1} driven by the same  Brownian motion and initial configuration. 
  
  Firstly, we need  to derive the  flocking  and concentration result in the probabilistic sense  for the  McKean process $(\bar{x}_t^i,\bar{v}_t^i)$ for  preparation. 
We set 
\begin{align*}
	\begin{aligned}\label{E}
		& a(x,t):= \int_{\mathbb{R}^{2d}} \psi(|x-y|)f(y,v_\ast,t)dv_\ast dy,\\
		& b(x,t):= \int_{\mathbb{R}^{2d}} v_\ast \psi(|x-y|)f(y,v_\ast,t)dv_\ast dy.
	\end{aligned} 
\end{align*}
Then we have 
\begin{align}
	\begin{aligned}
		&  \psi_m ||f_0||_{L^1} \leq a(x,t) \leq \psi_M ||f_0||_{L^1},\\
		&  |b(x,t)| \leq 2\psi_M e^{-\frac{2C_mt}{3}}\sqrt{||f_0||_{L^1}||(|v|^2+|x|^2)f_0||_{L^1}},\\
	\end{aligned} 
\end{align} 
and \eqref{MK1} becomes
\begin{align}
	\left\{\begin{aligned}\label{MK2}
		&d\bar{x}_t^i =\bar{v}_t^i dt,   \qquad  t>0,  \quad  1 \leq i \leq N,  \\
		&d\bar{v}_t^i = -\kappa (a(\bar{x}_t^i,t)\bar{v}_t^i-b(\bar{x}_t^i,t)) dt-\bar{x}_t^idt+\sqrt{2\sigma} \bar{v}_t^i dW_t,\\
		&f=law(\bar{x}_t^i,\bar{v}_t^i),\quad (\bar{x}_t^i(0),\bar{v}_t^i(0))=(x_0^i,v_0^i).
	\end{aligned}\right.
\end{align}
	 Now we derive a  differential inequality.
\begin{lemma}
	Suppose that the communication weight function $\psi$, $\kappa$, $\sigma$ and initial configuration $f_0$ satisfy the conditions: there exist positive constants $\psi_m,\psi_M$ such that
	\begin{align*}
		\begin{aligned} 
			& 0< \psi_m \leq \psi \leq \psi_M,  \quad \kappa\psi_m||f_0||_{L^1} > d\sigma,\quad	 \int_{\mathbb{R}^{2d}}(1+|v|^2+|x|^2)f_0dxdv<\infty.
		\end{aligned}
	\end{align*}
	Then, we have 
	\begin{align*}
		\begin{aligned} 
				 \frac{d}{dt}E\left[\frac{1}{2}|\bar{x}_t^i|^2+\frac{1}{2}|\bar{v}_t^i|^2+\epsilon\bar{x}_t^i\cdot\bar{v}_t^i\right]
				\leq -\eta E\left[|\bar{x}_t^i|^2+|\bar{v}_t^i|^2\right]+\lambda e^{-\frac{4}{3}C_m t},
		\end{aligned}
	\end{align*}
where  $\epsilon=\min \{\frac{1}{2},\frac{\kappa\psi_m||f_0||_{L^1}-\sigma}{2(1+2(\kappa\psi_M)^2)}\}$, $\eta = \min\{\frac{1}{2}\epsilon,\frac{1}{4}(\kappa \psi_m ||f_0||_{L^1}-\sigma) \}$, $\delta=\frac{\kappa\psi_m||f_0||_{L^1}-\sigma}{\kappa}$, $\lambda = (\frac{2}{\delta}+4\epsilon\kappa)\kappa\psi_M^2 ||f_0||_{L^1}||(|v|^2+|x|^2)f_0||_{L^1}$ and $C_m$ is given in Theorem \ref{u4}.
\end{lemma}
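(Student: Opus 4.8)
The plan is to treat the quadratic Lyapunov functional $\Phi(x,v):=\frac12|x|^2+\frac12|v|^2+\epsilon\,x\cdot v$ exactly as in Theorem~\ref{dthm}, but now along a single McKean trajectory $(\bar x_t^i,\bar v_t^i)$ governed by \eqref{MK2}, and to differentiate $E[\Phi(\bar x_t^i,\bar v_t^i)]$ by It\^o's formula. Since only the velocity equation carries noise, with coefficient $\sqrt{2\sigma}\,\bar v_t^i$, the It\^o correction is $\sigma|\bar v_t^i|^2$ and the stochastic integral has zero expectation. Writing $\nabla_x\Phi=x+\epsilon v$ and $\nabla_v\Phi=v+\epsilon x$ and inserting the drift $-\kappa(a\bar v_t^i-b)-\bar x_t^i$, I expect the two $\bar x_t^i\cdot\bar v_t^i$ contributions, one from the transport term and one from the harmonic restoring force $-\bar x_t^i$, to cancel exactly, just as the cross terms cancel in \eqref{sum1} and in Lemma~\ref{le}. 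This leaves
\begin{align*}
\frac{d}{dt}E[\Phi]=E\Big[(\epsilon+\sigma)|\bar v_t^i|^2-\kappa a|\bar v_t^i|^2-\epsilon\kappa a\,\bar x_t^i\cdot\bar v_t^i+\kappa\,\bar v_t^i\cdot b+\epsilon\kappa\,\bar x_t^i\cdot b-\epsilon|\bar x_t^i|^2\Big],
\end{align*}
where $a=a(\bar x_t^i,t)$ and $b=b(\bar x_t^i,t)$.

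Next I would insert the a priori bounds on $a$ and $b$ established above. The dissipative term is controlled from below using $a\ge\psi_m\|f_0\|_{L^1}$, turning $-\kappa a|\bar v_t^i|^2$ into the genuinely negative $-\kappa\psi_m\|f_0\|_{L^1}|\bar v_t^i|^2$; the indefinite term $-\epsilon\kappa a\,\bar x_t^i\cdot\bar v_t^i$ is handled using $a\le\psi_M\|f_0\|_{L^1}$. The three indefinite inner products are then treated by weighted Young inequalities. For $\kappa\,\bar v_t^i\cdot b$ I would use the weight $\delta=(\kappa\psi_m\|f_0\|_{L^1}-\sigma)/\kappa$, so that exactly half of the available dissipation, $\tfrac12(\kappa\psi_m\|f_0\|_{L^1}-\sigma)|\bar v_t^i|^2$, is consumed and the remainder $\tfrac{\kappa}{2\delta}|b|^2$ feeds the inhomogeneity; for $-\epsilon\kappa a\,\bar x_t^i\cdot\bar v_t^i$ and $\epsilon\kappa\,\bar x_t^i\cdot b$ I would choose the weights so that their $|\bar x_t^i|^2$-parts are each at most $\tfrac{\epsilon}{4}|\bar x_t^i|^2$, making the coefficient of $E[|\bar x_t^i|^2]$ equal to $-\epsilon+\tfrac\epsilon4+\tfrac\epsilon4=-\tfrac\epsilon2$. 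Crucially, the two $b$-remainders combine, after inserting the decay estimate $|b(x,t)|^2\le 4\psi_M^2\|f_0\|_{L^1}\,\|(|v|^2+|x|^2)f_0\|_{L^1}\,e^{-\frac43 C_mt}$ (itself a consequence of the velocity-variance flocking \eqref{mkf} of Theorem~\ref{u4}), into precisely $\lambda\,e^{-\frac43 C_mt}$ with the stated $\lambda$.

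Collecting the coefficients, the surviving $|\bar x_t^i|^2$-coefficient is $-\tfrac\epsilon2$, while the $|\bar v_t^i|^2$-coefficient becomes $(\epsilon+\sigma)-\kappa\psi_m\|f_0\|_{L^1}+\tfrac12(\kappa\psi_m\|f_0\|_{L^1}-\sigma)+\epsilon(\kappa\psi_M\|f_0\|_{L^1})^2$, which is negative and bounded above by $-\tfrac14(\kappa\psi_m\|f_0\|_{L^1}-\sigma)$ once $\epsilon$ is taken as small as in the statement. Setting $\eta=\min\{\tfrac\epsilon2,\tfrac14(\kappa\psi_m\|f_0\|_{L^1}-\sigma)\}$ then yields the asserted differential inequality.

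I expect the main obstacle to be the cross term $-\epsilon\kappa a\,\bar x_t^i\cdot\bar v_t^i$, which is new relative to the noise-free and potential-free analyses and carries no definite sign: it is produced precisely by the coupling of the harmonic oscillator with the mean-field drag coefficient $a(\bar x_t^i,t)$. The delicate point is to split it by a weighted Young inequality that places only an $O(\epsilon)$ weight on $|\bar x_t^i|^2$, so that the small but strictly negative $-\tfrac\epsilon2|\bar x_t^i|^2$ is preserved, while dumping the reciprocal weight onto $|\bar v_t^i|^2$, where the strong drag $-\kappa\psi_m\|f_0\|_{L^1}|\bar v_t^i|^2$ can absorb it; this is exactly what forces $\epsilon$ to be as small as stated. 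A secondary technical point is to confirm that the $b$-terms genuinely decay in time rather than contribute a constant forcing, which hinges on feeding the flocking rate $C_m$ from Theorem~\ref{u4} into the bound for $b$.
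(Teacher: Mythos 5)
Your proposal is correct and follows essentially the same route as the paper: the paper computes $d|\bar{x}_t^i|^2$, $d|\bar{v}_t^i|^2$ and $d(\bar{x}_t^i\cdot\bar{v}_t^i)$ separately by It\^o's formula and then takes the combination $\tfrac12+\tfrac12+\epsilon$, which is just a reorganization of your direct differentiation of $\Phi$, and it uses the identical ingredients — cancellation of the $\bar{x}_t^i\cdot\bar{v}_t^i$ drift terms, the It\^o correction $\sigma|\bar{v}_t^i|^2$, the bounds $\psi_m\|f_0\|_{L^1}\le a\le\psi_M\|f_0\|_{L^1}$, the same $\delta$-Young weight consuming half the dissipation on $\kappa\,\bar v_t^i\cdot b$, the $\tfrac{\epsilon}{4}|\bar x_t^i|^2$ splits on the two cross terms, and the decay of $b$ fed by the flocking rate $C_m$ of Theorem \ref{u4}. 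Your coefficient bookkeeping reproduces the paper's inequality exactly (your $\epsilon$-smallness condition matches the choice $\epsilon=\min\{\tfrac12,\tfrac14\tfrac{\kappa\psi_m\|f_0\|_{L^1}-\sigma}{(\kappa\psi_M\|f_0\|_{L^1})^2+1}\}$ made in the paper's proof, which the lemma statement itself records only up to a typographical discrepancy).
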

\begin{proof}

\noindent $\bullet$ (Estimate for $ d|\bar{x}_t^i|^2$):~ By It\^{o}'s formula, we can obtain 
\begin{align}
	\begin{aligned} \label{q1}
		&d|\bar{x}_t^i|^2 = 2 \bar{x}_t^i\cdot d\bar{x}_t^i + d\bar{x}_t^i \cdot d\bar{x}_t^i=  2\bar{x}_t^i\cdot  \bar{v}_t^idt.
	\end{aligned}
\end{align}

\noindent $\bullet$ (Estimate for $ d|\bar{v}_t^i|^2$):~By direct calculation, we have 
\begin{align*}
	\begin{aligned} 
		d|\bar{v}_t^i|^2= 2 \bar{v}_t^i \cdot d\bar{v}_t^i+d\bar{v}_t^i \cdot d\bar{v}_t^i 
		:= \mathcal{I}_{11}+\mathcal{I}_{12}.\\
	\end{aligned}
\end{align*}
We use \eqref{MK2}$_2$ and \eqref{E} to obtain 
\begin{align}
	\begin{aligned}\label{aa}
		\mathcal{I}_{11}&=2 \bar{v}_t^i\cdot d\bar{v}_t^i\\
		& = -2\kappa(a(\bar{x}_t^i,t)|\bar{v}_t^i|^2-b(\bar{x}_t^i,t)\bar{v}_t^i) dt-2 \bar{x}_t^i\cdot \bar{v}_t^idt+2\sqrt{2\sigma}|\bar{v}_t^i|^2dW_t\\
		& \leq -2\kappa\left(\psi_m||f_0||_{L^1}-\frac{\delta}{2}\right)|\bar{v}_t^i|^2dt+\frac{4\kappa\psi_M^2}{\delta}e^{-\frac{4}{3}C_mt}||f_0||_{L^1}||(|v|^2+|x|^2)f_0||_{L^1}dt\\
		&\quad -2 \bar{x}_t^i\cdot \bar{v}_t^idt+2\sqrt{2\sigma}|\bar{v}_t^i|^2 dW_t,
	\end{aligned}
\end{align}
where we used the $\delta$-Young inequailty and $\delta$ will be defined later:
\begin{align*}
	\begin{aligned}
	   |\bar{v}_t^i||b(\bar{x}_t^i,t)|  \leq \frac{\delta}{2}|\bar{v}_t^i|^2+ \frac{2}{\delta}\psi_M^2e^{-\frac{4}{3}C_mt}||f_0||_{L^1}||(|v|^2+|x|^2)f_0||_{L^1}.
	\end{aligned}
\end{align*}

The term $\mathcal{I}_{12}$ can be easily obtained as follows:
\begin{align}
	\begin{aligned} \label{bb}
		\mathcal{I}_{12}&=d\bar{v}_t^i\cdot d\bar{v}_t^i
		= 2\sigma|\bar{v}_t^i|^2dt.
	\end{aligned}
\end{align}
 We combine \eqref{aa} and \eqref{bb}  to have
\begin{align}
	\begin{aligned}\label{q2}
		d|\bar{v}_t^i|^2 & \leq \!-2\kappa\!\left(\psi_m||f_0||_{L^1}\!-\!\frac{\delta}{2}\!-\!\frac{\sigma}{\kappa}\!\right)|\bar{v}_t^i|^2dt\!+\frac{4\kappa\psi_M^2}{\delta}e^{-\frac{4}{3}C_mt}||f_0||_{L^1}||(|v|^2+|x|^2)f_0||_{L^1}dt\\
		&\quad -2\bar{x}_t^i\cdot \bar{v}_t^idt+2\sqrt{2\sigma}|\bar{v}_t^i|^2dW_t.
	\end{aligned}
\end{align}
\noindent $\bullet$ (Estimate for $ d (\bar{x}_t^i\cdot \bar{v}_t^i)$):~Now we estimate the cross term:
\begin{align}
	\begin{aligned} \label{53}
		d(\bar{x}_t^i\cdot \bar{v}_t^i)&= \bar{x}_t^i\cdot d\bar{v}_t^i +\bar{v}_t^i\cdot d\bar{x}_t^i+d\bar{x}_t^i\cdot d\bar{v}_t^i \\
		&= \bar{x}_t^i\cdot [-\kappa (a(\bar{x}_t^i,t)\bar{v}_t^i-b(\bar{x}_t^i,t)) dt-\bar{x}_t^idt+\sqrt{2\sigma} \bar{v}_t^i dW_t]+|\bar{v}_t^i|^2dt\\
		&= -\kappa\bar{x}_t^i\cdot (a(\bar{x}_t^i,t)\bar{v}_t^i-b(\bar{x}_t^i,t))dt+\sqrt{2\sigma} \bar{x}_t^i\cdot \bar{v}_t^idW_t-|\bar{x}_t^i|^2dt+|\bar{v}_t^i|^2 dt\\
		&:= \mathcal{I}_{21}dt+\sqrt{2\sigma} \bar{x}_t^i\cdot \bar{v}_t^idW_t-|\bar{x}_t^i|^2dt+|\bar{v}_t^i|^2 dt.
	\end{aligned}
\end{align}
$\mathcal{I}_{21}$ can be estimated as follows:
\begin{align}
	\begin{aligned} \label{51}
		& |\mathcal{I}_{21}| = \kappa|\bar{x}_t^i\cdot (a(\bar{x}_t^i,t)\bar{v}_t^i-b(\bar{x}_t^i,t))| \\
		& \leq \kappa \psi_M ||f_0||_{L^1}\left(\kappa \psi_M ||f_0||_{L^1}|v_t^i|^2+\frac{1}{4\kappa \psi_M ||f_0||_{L^1}}|x_t^i|^2\right)
		+\kappa|\bar{x}_t^i| |b(\bar{x}_t^i,t)|\\
		& \leq  \left[\frac{1}{4}|\bar{x}_t^i|^2+(\kappa \psi_M ||f_0||_{L^1})^2|\bar{v}_t^i|^2\right]\!+\!2\kappa|\bar{x}_t^i| \psi_M e^{-\frac{2C_mt}{3}}\!\sqrt{||f_0||_{L^1}||(|v|^2+|x|^2)f_0||_{L^1}}\\
		&\leq  \left[\frac{1}{2}|\bar{x}_t^i|^2+(\kappa \psi_M ||f_0||_{L^1})^2|\bar{v}_t^i|^2\right]+4\kappa^2\psi_M^2e^{-\frac{4}{3}C_mt} ||f_0||_{L^1}||(|v|^2+|x|^2)f_0||_{L^1}.
	\end{aligned}
\end{align}
By inserting \eqref{51}  into \eqref{53}, we have 
\begin{align}
	\begin{aligned}\label{q3}
		d(\bar{x}_t^i\cdot \bar{v}_t^i)&\leq -\frac{1}{2}|\bar{x}_t^i|^2dt+ \left[(\kappa \psi_M ||f_0||_{L^1})^2+1\right]|\bar{v}_t^i|^2 dt\\
		& \quad+4\kappa^2\psi_M^2e^{-\frac{4}{3}C_mt} ||f_0||_{L^1}||(|v|^2+|x|^2)f_0||_{L^1}dt+\sqrt{2\sigma} \bar{x}_t^i\cdot \bar{v}_t^idW_t.
		\end{aligned}
\end{align}

Now we take a combination  $\frac{1}{2}\eqref{q1}+\frac{1}{2}\eqref{q2}+\epsilon\eqref{q3}$ to get the following differential inequality:
\begin{align*}
	\begin{aligned}
		& dE\left(\frac{1}{2}|\bar{x}_t^i|^2+\frac{1}{2}|\bar{v}_t^i|^2+\epsilon \bar{x}_t^i\cdot \bar{v}_t^i\right)\\
		&\leq -\left[\kappa \psi_m ||f_0||_{L^1}-\sigma-\frac{\delta\kappa}{2}-((\kappa\psi_M||f_0||_{L^1})^2+1)\epsilon\right]E[|\bar{v}_t^i|^2]dt - \frac{\epsilon}{2}E[|\bar{x}_t^i|^2] dt\\
		& \quad +\left(\frac{2}{\delta}+4\epsilon\kappa\right)\kappa\psi_M^2e^{-\frac{4}{3}C_mt} ||f_0||_{L^1}||(|v|^2+|x|^2)f_0||_{L^1}dt.
	\end{aligned}
\end{align*}
We take $\delta:=\frac{\kappa\psi_m||f_0||_{L^1}-\sigma}{\kappa}$, $\epsilon: = \min\{\frac{1}{2},\frac{1}{4}\frac{\kappa \psi_m ||f_0||_{L^1}-\sigma}{(\kappa \psi_M ||f_0||_{L^1})^2+1}\}$ and obtain
\begin{align}
	\begin{aligned} \label{f}
		dE\left[\frac{1}{2}|\bar{x}_t^i|^2+\frac{1}{2}|\bar{v}_t^i|^2+\epsilon\bar{x}_t^i\cdot \bar{v}_t^i\right]
	 \leq -\eta E\left[|\bar{x}_t^i|^2+|\bar{v}_t^i|^2\right]dt+\lambda e^{-\frac{4}{3}C_m t} dt,\\
	\end{aligned}
\end{align}
where $\eta := \min\{\frac{1}{2}\epsilon,\frac{1}{4}(\kappa \psi_m ||f_0||_{L^1}-\sigma) \}, \lambda := (\frac{2}{\delta}+4\epsilon\kappa)\kappa\psi_M^2 ||f_0||_{L^1}||(|v|^2+|x|^2)f_0||_{L^1}$.
\end{proof}
\begin{lemma}\label{i5}
Suppose that the communication weight function $\psi$, $\kappa$, $\sigma$ and initial configuration $f_0$ satisfy the conditions: there exist positive constants $\psi_m,\psi_M$ such that
 \begin{align*}
 	\begin{aligned} 
 		& 0 < \psi_m \leq \psi \leq \psi_M, \quad \kappa\psi_m||f_0||_{L^1} >d\sigma, \quad \int_{\mathbb{R}^{2d}}(1+|v|^2+|x|^2)f_0dxdv<\infty, \quad t\geq0.
 	\end{aligned}
 \end{align*}
Then we have
	\begin{align*}
		\begin{aligned}
			& E[|\bar{x}_t^i|^2+|\bar{v}_t^i|^2] \leq Ce^{-\frac{4}{3}C_\ast t},
		\end{aligned}
	\end{align*}
where $C_\ast=\min\{C_m,\eta\}$ and $C$ is a constant which depends on initial configuration, $\psi$, $\kappa$ and $\sigma$.
\end{lemma}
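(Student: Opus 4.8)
The goal is to upgrade the differential inequality from the previous lemma,
\[
\frac{d}{dt}\,\Phi(t)\le -\eta\,E\!\left[|\bar{x}_t^i|^2+|\bar{v}_t^i|^2\right]+\lambda e^{-\frac{4}{3}C_m t},
\qquad
\Phi(t):=E\!\left[\tfrac{1}{2}|\bar{x}_t^i|^2+\tfrac{1}{2}|\bar{v}_t^i|^2+\epsilon\,\bar{x}_t^i\cdot\bar{v}_t^i\right],
\]
into the stated exponential decay of $E[|\bar{x}_t^i|^2+|\bar{v}_t^i|^2]$. The plan is to first record that, exactly as in the equivalence used in Theorems \ref{dthm} and \ref{u4}, since $|\epsilon|\le\frac12$ the cross term is controlled: there are constants $c_1,c_2>0$ (one may take $c_1=\frac{3}{16}$, $c_2=\frac{3}{4}$ as in \eqref{L}) with
\[
c_1\,E\!\left[|\bar{x}_t^i|^2+|\bar{v}_t^i|^2\right]\le \Phi(t)\le c_2\,E\!\left[|\bar{x}_t^i|^2+|\bar{v}_t^i|^2\right].
\]
This two-sided bound is what lets the "wrong-looking'' Lyapunov functional $\Phi$ stand in for the genuine second-moment quantity $E[|\bar{x}_t^i|^2+|\bar{v}_t^i|^2]$.

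Using the upper bound $E[|\bar{x}_t^i|^2+|\bar{v}_t^i|^2]\ge \Phi(t)/c_2$ on the right-hand side of the differential inequality converts it into a closed, scalar linear inequality for $\Phi$ alone:
\[
\frac{d}{dt}\,\Phi(t)\le -\frac{\eta}{c_2}\,\Phi(t)+\lambda e^{-\frac{4}{3}C_m t}.
\]
I would then integrate this by the standard integrating-factor method. Writing $\gamma:=\eta/c_2$, multiplying by $e^{\gamma t}$ and integrating gives
\[
\Phi(t)\le \Phi(0)e^{-\gamma t}+\lambda\int_0^t e^{-\gamma(t-s)}e^{-\frac{4}{3}C_m s}\,ds.
\]
The remaining step is to bound the convolution integral. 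If $\gamma\neq\frac{4}{3}C_m$ it evaluates to $\frac{\lambda}{|\gamma-\frac{4}{3}C_m|}\bigl(e^{-\frac{4}{3}C_m t}+e^{-\gamma t}\bigr)$ up to sign bookkeeping, and in the borderline case $\gamma=\frac{4}{3}C_m$ one picks up an extra factor $t$, which is still absorbed into any exponential with a strictly smaller rate. In every case the outcome is dominated by $C'e^{-\frac{4}{3}C_\ast t}$ with $C_\ast=\min\{C_m,\gamma\}=\min\{C_m,\eta/c_2\}$ (matching the stated $\min\{C_m,\eta\}$ up to the harmless constant $c_2$, which can be folded into $C$). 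Finally, applying the lower bound $c_1 E[\cdots]\le\Phi$ returns the estimate in terms of the second moment,
\[
E\!\left[|\bar{x}_t^i|^2+|\bar{v}_t^i|^2\right]\le \frac{1}{c_1}\Phi(t)\le C e^{-\frac{4}{3}C_\ast t},
\]
with $C$ depending only on $\Phi(0)$ (hence on the initial configuration), $\psi$, $\kappa$ and $\sigma$ through $\eta,\lambda,C_m$.

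I do not expect a genuine obstacle here; the argument is a routine Gr\"onwall/Duhamel estimate once the functional equivalence is in place. The only point demanding a little care is the resonance case where the internal decay rate $\gamma$ coincides with the forcing rate $\frac{4}{3}C_m$: there the naive partial-fraction bound fails and one must either keep the $t\,e^{-\gamma t}$ term and dominate it by $e^{-\frac{4}{3}C_\ast t}$ for a slightly smaller $C_\ast$, or simply take $C_\ast$ strictly less than $\min\{C_m,\gamma\}$ at the outset. Since the statement defines $C_\ast=\min\{C_m,\eta\}$ and allows $C$ to absorb constants, this is a cosmetic rather than a substantive difficulty, and the inequality follows as claimed.
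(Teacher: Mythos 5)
Your proposal is correct and follows essentially the same route as the paper: the two-sided equivalence $\frac{3}{16}E[|\bar{x}_t^i|^2+|\bar{v}_t^i|^2]\le\Phi(t)\le\frac{3}{4}E[|\bar{x}_t^i|^2+|\bar{v}_t^i|^2]$ closes the inequality as $\Phi'\le-\frac{4}{3}\eta\,\Phi+\lambda e^{-\frac{4}{3}C_m t}$, and your integrating-factor computation is exactly the paper's appendix Gr\"onwall lemma, yielding the rate $\frac{4}{3}\min\{\eta,C_m\}=\frac{4}{3}C_\ast$. Your explicit treatment of the resonance case $\eta=C_m$ is in fact slightly more careful than the paper, whose closed-form bound $\frac{4\lambda}{\eta-C_m}\bigl(e^{-\frac{4}{3}C_m t}-e^{-\frac{4}{3}\eta t}\bigr)$ is degenerate there.
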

\begin{proof}
We set 
	\begin{align*}
		\begin{aligned} 
			& \mathcal Z_t := |\bar{x}_t^i|^2+|\bar{v}_t^i|^2.
		\end{aligned}
	\end{align*}
Since $|\epsilon| \leq \frac{1}{2}$, the quantity $\frac{1}{2}|\bar{x}_t^i|^2+\frac{1}{2}|\bar{v}_t^i|^2+\epsilon \bar{x}_t^i\cdot \bar{v}_t^i$ is equivalent to $|\bar{x}_t^i|^2+|\bar{v}_t^i|^2$ :
\[  \frac{3}{16}\mathcal Z_t \le  \frac{1}{2}|\bar{x}_t^i|^2+\frac{1}{2}|\bar{v}_t^i|^2+\epsilon\bar{x}_t^i\cdot \bar{v}_t^i\le  \frac{3}{4}\mathcal Z_t .\]
Then, it follows from \eqref{f} that  $\frac{1}{2}|\bar{x}_t^i|^2+\frac{1}{2}|\bar{v}_t^i|^2+\epsilon \bar{x}_t^i\cdot \bar{v}_t^i$ satisfies the following SDE:	
	\begin{align*}
		\begin{aligned}
	dE\left[\frac{1}{2}|\bar{x}_t^i|^2+\frac{1}{2}|\bar{v}_t^i|^2+\epsilon \bar{x}_t^i\cdot \bar{v}_t^i\right]\leq -\frac{4}{3}\eta E\left[\frac{1}{2}|\bar{x}_t^i|^2+\frac{1}{2}|\bar{v}_t^i|^2+\epsilon \bar{x}_t^i\cdot \bar{v}_t^i\right]dt +\lambda e^{-\frac{4}{3}C_m t}dt.
		\end{aligned}
	\end{align*}
	Therefore, we obtain
\begin{align*}
	\begin{aligned} 
		&E[\mathcal Z_t] \leq 4E[\mathcal Z_0] e^{-\frac{4}{3}\eta t}+\frac{4\lambda }{\eta-C_m}(e^{-\frac{4}{3}C_m t}-e^{-\frac{4}{3}\eta t})\le Ce^{-\frac{4}{3}C_\ast t},
	\end{aligned}
\end{align*}
where $C_\ast=\min\{C_m,\eta\}$ and $C$ is a general constant. Then we get the desired result.
\end{proof}
 
 Now we are ready to state our result on the  exponential decay in time estimate for the limit process.
\begin{theorem}\label{4.1}
Suppose that the communication weight function $\psi$, $\kappa$, $\sigma$ and initial configuration $f_0$ satisfy the conditions: there exist positive constants $\psi_m,\psi_M$ such that
\begin{align*}
	\begin{aligned} 
		& 0 < \psi_m \leq \psi \leq \psi_M, \quad \kappa\psi_m\min\{||f_0||_{L^1},1\}>d\sigma, \quad  \int_{\mathbb{R}^{2d}}(1+|v|^2+|x|^2)f_0dxdv<\infty
	\end{aligned}
\end{align*}
and let $(x_t^i,v_t^i)$ and $(\bar{x}_t^i,\bar{v}_t^i,f)$ be solution processes to the systems in \eqref{SCS} and \eqref{MK2}, respectively.
Then, we have 
	\begin{align*}
		\begin{aligned} 
			& E[|x_t^i-\bar{x}_t^i|^2]+E[|v_t^i-\bar{v}_t^i|^2] \leq Ce^{-C_3 t},
		\end{aligned}
	\end{align*}
where  $C$ and $C_3$ depend on initial configuration, $\psi$, $\kappa$ and $\sigma$.
\end{theorem}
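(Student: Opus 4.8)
The plan is to run a synchronous coupling argument. Since the particle system \eqref{SCS} and the McKean system \eqref{MK2} are driven by the \emph{same} Brownian motion and launched from the \emph{same} configuration, the difference processes $X_t^i:=x_t^i-\bar{x}_t^i$ and $V_t^i:=v_t^i-\bar{v}_t^i$ vanish at $t=0$ and solve
\begin{align*}
	dX_t^i&=V_t^i\,dt,\\
	dV_t^i&=\bigl(F_t^i-\bar F_t^i\bigr)\,dt-X_t^i\,dt+\sqrt{2\sigma}\,V_t^i\,dW_t,
\end{align*}
where $F_t^i:=-\tfrac{\kappa}{N}\sum_{j=1}^{N}\psi(|x_t^j-x_t^i|)(v_t^i-v_t^j)=-\kappa\bigl(a_t^{N,i}v_t^i-b_t^{N,i}\bigr)$ is the particle force, written through the empirical coefficients $a_t^{N,i}:=\tfrac{1}{N}\sum_j\psi(|x_t^j-x_t^i|)$ and $b_t^{N,i}:=\tfrac{1}{N}\sum_j\psi(|x_t^j-x_t^i|)v_t^j$, and $\bar F_t^i:=-\kappa\bigl(a(\bar{x}_t^i,t)\bar{v}_t^i-b(\bar{x}_t^i,t)\bigr)$ is the mean-field force from \eqref{MK2}. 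The noise enters only through $\sqrt{2\sigma}V_t^i\,dW_t$, so its It\^o correction reproduces exactly the $\sigma$-term already treated in Theorem \ref{dthm}; the entire difficulty sits in the drift discrepancy $F_t^i-\bar F_t^i$.

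Mirroring the Lyapunov scheme of Theorems \ref{dthm} and \ref{u4}, I would monitor
\[
	\mathcal D_t:=E\Bigl[\tfrac12|X_t^i|^2+\tfrac12|V_t^i|^2+\epsilon\,X_t^i\cdot V_t^i\Bigr],\qquad |\epsilon|\le\tfrac12,
\]
which is comparable to $E[|X_t^i|^2+|V_t^i|^2]$. Applying It\^o's formula to $|X_t^i|^2$, $|V_t^i|^2$ and $X_t^i\cdot V_t^i$ and taking expectations annihilates the martingale parts. The crucial dissipation comes from the diagonal part of $2V_t^i\cdot(F_t^i-\bar F_t^i)$: substituting $v_t^i=V_t^i+\bar v_t^i$ and using $F_t^i=-\kappa(a_t^{N,i}v_t^i-b_t^{N,i})$ produces the term $-2\kappa a_t^{N,i}|V_t^i|^2$, and since $a_t^{N,i}\ge\psi_m$ this is bounded by $-2\kappa\psi_m|V_t^i|^2$ \emph{without any use of the zero-mean constraint}. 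The analogous mean-field damping carries the weight $\psi_m\|f_0\|_{L^1}$; requiring \emph{both} coefficients to beat the $\sigma$-correction is exactly why the hypothesis reads $\kappa\psi_m\min\{\|f_0\|_{L^1},1\}>d\sigma$.

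To control the remaining error I would compare the empirical coefficients with their mean-field counterparts through the intermediate object $\bar a_t^{N,i}:=\tfrac{1}{N}\sum_j\psi(|\bar{x}_t^j-\bar{x}_t^i|)$ (and likewise for $b$), splitting
\[
	a_t^{N,i}-a(\bar{x}_t^i,t)=\bigl(a_t^{N,i}-\bar a_t^{N,i}\bigr)+\bigl(\bar a_t^{N,i}-a(\bar{x}_t^i,t)\bigr).
\]
The first bracket is a \emph{Lipschitz} term: using the regularity and boundedness of $\psi$ it is dominated, after a Young inequality and the exchangeability of the particles, by a multiple of $E[|X_t^i|^2+|V_t^i|^2]$, weighted by moments of the McKean process that stay bounded and decay by Lemma \ref{i5}; these contributions are absorbed into the dissipation $-c_1\mathcal D_t$. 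The second bracket is the genuine \emph{fluctuation}: conditionally on $(\bar{x}_t^i,\bar{v}_t^i)$ the summands are independent and centered with law $f$, so a variance/law-of-large-numbers computation gives $E|\bar a_t^{N,i}-a(\bar{x}_t^i,t)|^2\le C N^{-1}$, and invoking Lemma \ref{i5} to supply the decaying moment weights turns this into a forcing of size $\tfrac{C}{N}e^{-ct}$ that is small in $N$ \emph{and} decays in time.

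Collecting the pieces and fixing $\epsilon$ as in Theorem \ref{u4} (so that alignment plus harmonic confinement dominate the $\sigma$-correction and the cross term $X_t^i\cdot V_t^i$ does not spoil dissipativity) yields a differential inequality
\[
	\frac{d\mathcal D_t}{dt}\le-c_1\mathcal D_t+\frac{C}{N}e^{-ct}.
\]
Since $\mathcal D_0=0$, Gr\"onwall's inequality gives $\mathcal D_t\le\frac{C}{N}e^{-C_3t}$, and the equivalence $\mathcal D_t\simeq E[|X_t^i|^2+|V_t^i|^2]$ yields the stated bound (indeed the stronger, $N$-vanishing, form that furnishes the mean-field limit). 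I expect the main obstacle to be the fluctuation and mixed-moment estimate of the previous paragraph: obtaining a constant that is \emph{uniform over the infinite time horizon} requires the detailed exponential moment decay of the limit process (Lemma \ref{i5}, and the refined McKean bounds in the spirit of \cite{HKR}), so that the $N^{-1}$ errors and the $E[|X_t^i|^2|\bar{v}_t^i|^2]$-type cross terms neither accumulate in time nor overwhelm the dissipation, the cross term being precisely what the auxiliary parameter $\epsilon$ is designed to tame.
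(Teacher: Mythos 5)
There is a genuine gap, and it sits exactly where you flagged it: the ``Lipschitz bracket'' $a_t^{N,i}-\bar a_t^{N,i}$ (and its $b$-analogue). First, Theorem \ref{4.1} assumes only $0<\psi_m\le\psi\le\psi_M$; no Lipschitz continuity of $\psi$ is available, so the splitting through $\bar a_t^{N,i}$ cannot even be started under the stated hypotheses. Second, and more seriously, even granting Lipschitz $\psi$, the position discrepancy in $\psi$ is modulated by velocities: the resulting error is of the form $\mathrm{Lip}(\psi)\,E\bigl[(|X_t^i|+|X_t^j|)\,|\bar v_t^j|\,|V_t^i|\bigr]$, which after Cauchy--Schwarz produces fourth-moment cross terms such as $E[|X_t^j|^2|\bar v_t^j|^2]$ that do \emph{not} factor (the coupled difference $X_t^j$ and the McKean velocity $\bar v_t^j$ are not independent) and cannot be closed into $\mathcal D_t$ with constants that are simultaneously $N$-independent and uniform in time. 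This is precisely why Bolley--Ca\~nizo--Carrillo \cite{BC}, and the paper's Theorem \ref{t4.2} following them, obtain only the finite-horizon rate $N^{-e^{-Ct}}$ and need the strengthened exponential moment hypothesis $\int e^{C|v|^2}f_0\,dxdv<\infty$ (via a truncation at level $r$, giving $C(1+r)\mathfrak L+Ce^{-r}$). Your claimed inequality $\mathcal D_t'\le -c_1\mathcal D_t+\frac{C}{N}e^{-ct}$, which would yield $\mathcal D_t\le \frac{C}{N}e^{-C_3t}$, is strictly stronger than anything proved in the paper and would render its Theorems \ref{t4.2} and the final patching argument superfluous; the obstacle you name in your last paragraph is real and your proposal supplies no mechanism to overcome it.

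The paper's actual proof of Theorem \ref{4.1} avoids this entirely by never comparing $\psi$ at particle versus McKean positions. It keeps $\psi(|x_t^j-x_t^i|)$ throughout: the term $\mathcal I_{31}$ retains the full difference $(v_t^i-v_t^j)-(\bar v_t^i-\bar v_t^j)$ and is symmetrized ($i\leftrightarrow j$) into a nonpositive quadratic form plus a correction $\mathcal F$ controlled using the zero-mean constraint $\sum_j v_t^j=0$ of \eqref{SCS} (note: your remark that no zero-mean constraint is needed applies only to the diagonal dissipation, not to this correction), while the genuinely nonlinear piece $\mathcal I_{33}$ is bounded \emph{crudely}, with no law of large numbers and no smallness in $N$, by $C\sqrt{E[|v_t^i-\bar v_t^i|^2]}\sqrt{E[|\bar v_t^i|^2]}$, where the factor $\sqrt{E[|\bar v_t^i|^2]}\le Ce^{-\frac{2}{3}C_\ast t}$ from Lemma \ref{i5} supplies the decay. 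The resulting inequality is
\begin{align*}
Z'(t)\le -\tfrac{4C_1}{3}Z(t)+\tfrac{4\sqrt 3}{3}C_2\bigl(e^{-\frac23 C_m t}+e^{-\frac23 C_\ast t}\bigr)\sqrt{Z(t)},
\end{align*}
whose Gr\"onwall solution gives $Ce^{-C_3t}$ --- uniform in $N$ but \emph{not} vanishing as $N\to\infty$; the $N$-smallness is recovered only on finite horizons (Theorem \ref{t4.2}) and the two estimates are then patched together by contradiction. To repair your argument you would need either to drop the Lipschitz splitting and adopt the paper's crude-but-decaying bound on $\mathcal I_{33}$, or to add Lipschitz and exponential-moment hypotheses and accept the $N^{-e^{-Ct}}$ deterioration on finite time intervals.
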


To prove this theorem, we first introduce the following functional:
	\begin{align*}
		\begin{aligned}
			& \tilde{\mathfrak{L}}(t) = E\left[\frac{1}{2}|x_t^i-\bar{x}_t^i|^2+\frac{1}{2}|v_t^i-\bar{v}_t^i|^2+\epsilon(x_t^i-\bar{x}_t^i)\cdot (v_t^i-\bar{v}_t^i)\right], \quad |\epsilon|\leq \frac{1}{2},
		\end{aligned}
	\end{align*}
which is equivalent to $\mathfrak L(t)=E[|x_t^i-\bar{x}_t^i|^2+|v_t^i-\bar{v}_t^i|^2]$.\\
Now we  derive a differential equation for $\tilde{\mathfrak L}(t)$.

	\noindent $\bullet$ (Estimate for $dE[|x_t^i-\bar{x}_t^i|^2]$): By straightforward calculation, we have
	\begin{align}
		\begin{aligned} \label{x}
			&dE[|x_t^i-\bar{x}_t^i|^2] = 2 E[(x_t^i-\bar{x}_t^i)\cdot(v_t^i-\bar{v}_t^i)]dt.
		\end{aligned}
	\end{align}

	\noindent $\bullet$ (Estimate for $dE[|v_t^i-\bar{v}_t^i|^2]$): It follows from \eqref{SCS} and \eqref{MK2} that $v_t^i-\bar{v}_t^i$ satisfies
	\begin{align*}
		\begin{aligned} 
			&d(v_t^i-\bar{v}_t^i) \\
			&= -\frac{\kappa}{N}\sum_{j=1}^N[\psi(|x_t^j-x_t^i|) (v_t^i-v_t^j)-a(\bar{x}_t^i,t)\bar{v}_t^i+b(\bar{x}_t^i,t)]dt\\
			&\quad-(x_t^i-\bar{x}_t^i)dt+\sqrt{2\sigma}(v_t^i-\bar{v}_t^i) dW_t.\\
		\end{aligned}
	\end{align*}
By It\^{o}'s formula, we can obtain
	\begin{align*}
		\begin{aligned} 
			&d|v_t^i-\bar{v}_t^i|^2 = 2(v_t^i-\bar{v}_t^i)\cdot d(v_t^i-\bar{v}_t^i) +d(v_t^i-\bar{v}_t^i)\cdot d(v_t^i-\bar{v}_t^i)\\
			&=-\frac{2\kappa}{N}\sum_{j=1}^N(v_t^i-\bar{v}_t^i)\cdot [\psi(|x_t^j-x_t^i|) (v_t^i-v_t^j)-a(\bar{x}_t^i,t)\bar{v}_t^i+b(\bar{x}_t^i,t)]dt-2(x_t^i-\bar{x}_t^i)\cdot(v_t^i-\bar{v}_t^i)dt\\
			&\quad+2\sqrt{2\sigma}|v_t^i-\bar{v}_t^i|^2dW_t+2\sigma|v_t^i-\bar{v}_t^i|^2dt.
		\end{aligned}
	\end{align*}
We use 
	\begin{align*}
		\begin{aligned} 
		  &E[|v_t^i-\bar{v}_t^i|^2dW_t]=0
		\end{aligned}
	\end{align*}
 to obtain
	\begin{align}
		\begin{aligned} \label{hh}
			&\frac{d}{dt} E[|v_t^i-\bar{v}_t^i|^2] =-\frac{2\kappa}{N}E\left[\sum_{j=1}^N(v_t^i-\bar{v}_t^i)\cdot(\psi(|x_t^j-x_t^i|)(v_t^i-v_t^j)-a(\bar{x}_t^i,t)\bar{v}_t^i+b(\bar{x}_t^i,t))\right]\\
			&\quad +2\sigma E[|v_t^i-\bar{v}_t^i|^2]-2E[(x_t^i-\bar{x}_t^i)\cdot(v_t^i-\bar{v}_t^i)]\\
			&:= \mathcal{I}_3+2\sigma E[|v_t^i-\bar{v}_t^i|^2]-2E[(x_t^i-\bar{x}_t^i)\cdot(v_t^i-\bar{v}_t^i)].
		\end{aligned}
	\end{align}
We further decompose the term $\mathcal{I}_3$ as follows:
	\begin{align*}
		\begin{aligned} 
			&\mathcal{I}_3 = -\frac{2\kappa}{N}E\left[\sum_{j=1}^N(v_t^i-\bar{v}_t^i)\cdot\psi(|x_t^j-x_t^i|) \left((v_t^i-v_t^j)-(\bar{v}_t^i-\bar{v}_t^j)\right)\right]+\frac{2\kappa}{N} E[(v_t^i-\bar{v}_t^i)\cdot a(\bar{x}_t^i,t)\bar{v}_t^i ]\\
			& \qquad -\frac{2\kappa}{N}E\left[\sum_{j\neq i}^N(v_t^i-\bar{v}_t^i)\cdot\left(\psi(|x_t^j-x_t^i|) (\bar{v}_t^i-\bar{v}_t^j)-a(\bar{x}_t^i,t)\bar{v}_t^i\right)\right]-\frac{2\kappa}{N}\sum_{j=1}^N E[(v_t^i-\bar{v}_t^i)\cdot b(\bar{x}_t^i,t)]\\
			& \quad:= \mathcal{I}_{31}+\mathcal{I}_{32}+\mathcal{I}_{33}+\mathcal{I}_{34}.
		\end{aligned}
	\end{align*}
\begin{lemma}
	The terms $\mathcal{I}_{3i}$, $i=1,\cdots,4$, satisfy the following estimates:
	\begin{align*}
		\begin{aligned} 
			& \mathcal{I}_{31}(t) \leq -2\kappa\psi_m E[|v_t^i-\bar{v}_t^i|^2]+2\kappa\psi_m\sqrt{E[|\bar{v}_t^i|^2]}\sqrt{E[|v_t^i-\bar{v}_t^i|^2]},\\
			& \mathcal{I}_{32}(t) \leq\frac{2\kappa \psi_M ||f_0||_{L^1}}{N}\sqrt{E[|v_t^i-\bar{v}_t^i|^2]}\sqrt{E[|\bar{v}_t^i|^2]},\\
			& \mathcal{I}_{33}(t) \leq  4\kappa \psi_M (1+||f_0||_{L^1})\sqrt{\frac{N-1}{N}} \sqrt{E[|v_t^i-\bar{v}_t^i|^2]}\sqrt{ E[|\bar{v}_t^i|^2]},\\
			& \mathcal{I}_{34}(t) \leq 4\kappa \psi_M e^{-\frac{2C_mt}{3}}\sqrt{||f_0||_{L^1} ||(|v|^2+|x|^2)f_0||_{L^1}}\sqrt{E[|v_t^i-\bar{v}_t^i|^2]}.
		\end{aligned}
	\end{align*}
\end{lemma}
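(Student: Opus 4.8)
The plan is to reduce all four estimates to the fluctuation variables $w_t^k:=v_t^k-\bar v_t^k$ and to treat each $\mathcal{I}_{3\ell}$ with one common tool, the Cauchy--Schwarz inequality, applied both in the inner product on $\mathbb{R}^d$ and, after taking expectations, in $L^2(\Omega)$. Writing $\psi_{ij}:=\psi(|x_t^j-x_t^i|)$, the inputs are the uniform bounds $\psi_m\le\psi_{ij}\le\psi_M$ and $\psi_m\|f_0\|_{L^1}\le a(x,t)\le\psi_M\|f_0\|_{L^1}$, the decay estimate $|b(x,t)|\le 2\psi_M e^{-2C_mt/3}\sqrt{\|f_0\|_{L^1}\|(|v|^2+|x|^2)f_0\|_{L^1}}$ recorded above, the conservation law $\sum_{j=1}^N v_t^j=0$, and the exchangeability of the McKean particles, which gives $E[|\bar v_t^j|^2]=E[|\bar v_t^i|^2]$ for every $j$. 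Each of $\mathcal{I}_{32},\mathcal{I}_{33},\mathcal{I}_{34}$ is an off-diagonal remainder---respectively the coincident index $j=i$, the gap between the empirical interaction and its mean-field value $a\bar v_t^i$, and the drift $b$---and should be bounded by a constant multiple of $\sqrt{E[|w_t^i|^2]}\,\sqrt{E[|\bar v_t^i|^2]}$.

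For $\mathcal{I}_{31}$ I would use $(v_t^i-v_t^j)-(\bar v_t^i-\bar v_t^j)=w_t^i-w_t^j$ to write
\[
\mathcal{I}_{31}=-\frac{2\kappa}{N}E\Big[\sum_{j=1}^N\psi_{ij}\,w_t^i\cdot(w_t^i-w_t^j)\Big],
\]
and split it into the self-part and the cross-part. The self-part $-\frac{2\kappa}{N}E[\sum_j\psi_{ij}|w_t^i|^2]$ is coercive, and bounding $\psi_{ij}\ge\psi_m$ gives exactly the dissipative term $-2\kappa\psi_m E[|w_t^i|^2]$. In the cross-part $\frac{2\kappa}{N}E[w_t^i\cdot\sum_j\psi_{ij}w_t^j]$ I would invoke the conservation law: since $\sum_j v_t^j=0$ we have $\sum_j w_t^j=-\sum_j\bar v_t^j$, which turns the fluctuation sum into a genuine $\bar v$-average, and Cauchy--Schwarz combined with $|\sum_j\bar v_t^j|^2\le N\sum_j|\bar v_t^j|^2$ and exchangeability then produces the claimed $2\kappa\psi_m\sqrt{E[|\bar v_t^i|^2]}\sqrt{E[|w_t^i|^2]}$. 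The term $\mathcal{I}_{32}=\frac{2\kappa}{N}E[w_t^i\cdot a(\bar x_t^i,t)\bar v_t^i]$ is the simplest: estimating $a\le\psi_M\|f_0\|_{L^1}$ and applying Cauchy--Schwarz in $L^2(\Omega)$, while retaining the prefactor $1/N$, gives the stated bound at once.

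For $\mathcal{I}_{33}$ I would first extract $w_t^i$ by Cauchy--Schwarz in $L^2(\Omega)$,
\[
|\mathcal{I}_{33}|\le\frac{2\kappa}{N}\sqrt{E[|w_t^i|^2]}\,\sqrt{E\Big[\big|\sum_{j\ne i}\big(\psi_{ij}(\bar v_t^i-\bar v_t^j)-a(\bar x_t^i,t)\bar v_t^i\big)\big|^2\Big]},
\]
and then bound the second factor by the triangle inequality together with $\psi_{ij}\le\psi_M$, $a\le\psi_M\|f_0\|_{L^1}$ and exchangeability; the $N-1$ summands and the normalisation $1/N$ combine into the factor $\sqrt{(N-1)/N}$ and the prefactor $4\kappa\psi_M(1+\|f_0\|_{L^1})$. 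Finally $\mathcal{I}_{34}=-2\kappa E[w_t^i\cdot b(\bar x_t^i,t)]$ is governed entirely by the a priori decay of the drift: inserting the bound on $|b|$ and using $E[|w_t^i|]\le\sqrt{E[|w_t^i|^2]}$ reproduces the exponentially decaying estimate.

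The points requiring care are the cross-part of $\mathcal{I}_{31}$ and the fluctuation term $\mathcal{I}_{33}$. Because every particle in \eqref{SCS} and \eqref{MK2} is driven by the \emph{same} Brownian motion, the McKean samples $\bar v_t^j$ are not independent, so no variance cancellation is available and one must rely solely on exchangeability and the conservation law; in $\mathcal{I}_{31}$ this is what converts $\sum_j w_t^j$ into the $\bar v$-average, and in $\mathcal{I}_{33}$ it is what lets the $N-1$ off-diagonal summands be controlled while keeping the correct $N$-dependence. The subtlety is that $\psi_{ij}$ carries the particle positions $x_t^j-x_t^i$ whereas the mean-field coefficient is $a(\bar x_t^i,t)$, so the discrepancy must be absorbed crudely through the sup-bounds on $\psi$ and $a$; the resulting $O(1)$ factors are harmless only because, through Lemma \ref{i5}, the accompanying $\sqrt{E[|\bar v_t^i|^2]}$ decays exponentially and the whole expression feeds into the Gr\"onwall argument for $\tilde{\mathfrak L}(t)$ in Theorem \ref{4.1}.
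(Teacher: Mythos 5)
Your treatments of $\mathcal{I}_{32}$, $\mathcal{I}_{33}$ and $\mathcal{I}_{34}$ go through essentially as in the paper (for $\mathcal{I}_{33}$ your triangle-inequality variant produces the factor $\tfrac{N-1}{N}\le\sqrt{\tfrac{N-1}{N}}$, which still implies the stated bound), but your plan for $\mathcal{I}_{31}$ has a genuine gap. In your notation $w_t^k:=v_t^k-\bar v_t^k$, after splitting off the self-part the cross-part is
\[
\frac{2\kappa}{N}\,E\Big[w_t^i\cdot\sum_{j=1}^N\psi(|x_t^j-x_t^i|)\,w_t^j\Big],
\]
and the conservation law $\sum_j w_t^j=-\sum_j\bar v_t^j$ is of no use here: the weight $\psi(|x_t^j-x_t^i|)$ depends on $j$, so $\sum_j\psi_{ij}w_t^j$ is not a multiple of $\sum_j w_t^j$ and cannot be turned into a $\bar v$-average. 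Nor may you freeze $\psi_{ij}$ at a constant inside this sum, since the summands $w_t^i\cdot w_t^j$ have no sign; the best crude bound, $\psi_{ij}\le\psi_M$ plus Cauchy--Schwarz and exchangeability, yields the non-decaying term $+2\kappa\psi_M E[|w_t^i|^2]$, which (as $\psi_M\ge\psi_m$) cancels the dissipation $-2\kappa\psi_m E[|w_t^i|^2]$ entirely. The constant $\psi_m$ multiplying $\sqrt{E[|\bar v_t^i|^2]}$ in the claimed estimate is itself a signal that the weight must be replaced by $\psi_m$ \emph{before} the cross term is isolated, which your ordering of steps cannot achieve.

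The paper's fix is the symmetrization you omitted: since by exchangeability the quantity is independent of $i$, average over $i$ and apply the $i\leftrightarrow j$ swap to rewrite
\[
\mathcal{I}_{31}=-\frac{\kappa}{N^2}\sum_{i,j=1}^N E\left[\psi(|x_t^j-x_t^i|)\,\big|w_t^i-w_t^j\big|^2\right],
\]
a sum of nonnegative terms, into which the lower bound $\psi\ge\psi_m$ may legitimately be inserted. Only then does one expand the square: the diagonal part gives $-2\kappa\psi_m E[|w_t^i|^2]$, and the cross term now carries the \emph{constant} weight $\psi_m$, so $\sum_j w_t^j=-\sum_j\bar v_t^j$ applies and Cauchy--Schwarz with exchangeability yields $2\kappa\psi_m\sqrt{E[|\bar v_t^i|^2]}\sqrt{E[|w_t^i|^2]}$, exactly the stated bound. (If $\psi$ were constant your self/cross split would be equivalent; it is precisely the spatial dependence of the communication weight that forces the symmetrize-first order of operations.) With this repair, the remaining three estimates in your proposal stand as written.
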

\begin{proof} The proof of this lemma is similar to that in \cite{HKR}.
	We first use the symmetry to estimate $\mathcal{I}_{31}$ as follows:
	\begin{align*}
		\begin{aligned}
			&\mathcal{I}_{31} = -\frac{2\kappa}{N^2}\sum_{i,j=1}^N E\left[(v_t^i-\bar{v}_t^i)\cdot\psi(|x_t^j-x_t^i|) \left((v_t^i-v_t^j)-(\bar{v}_t^i-\bar{v}_t^j)\right)\right].\\
		\end{aligned}
	\end{align*}
	We again use the trick $i \leftrightarrow j$ to obtain
	\begin{align*}
		\begin{aligned}
			 \mathcal{I}_{31} &=- \frac{2\kappa}{N^2}\sum_{i,j=1}^N E\left[(v_t^j-\bar{v}_t^j)\cdot\psi(|x_t^j-x_t^i|) \left((v_t^j-v_t^i)-(\bar{v}_t^j-\bar{v}_t^i)\right)\right]\\
			&= -\frac{\kappa}{N^2}\sum_{i,j=1}^N E\left[\psi(|x_t^j-x_t^i|)|(v_t^i-\bar{v}_t^i)-(v_t^j-\bar{v}_t^j)|^2\right]\\
			&  \leq -2\kappa\psi_m E\left[|v_t^i-\bar{v}_t^i|^2\right]+\frac{2\kappa\psi_m}{N^2}\sum_{i,j=1}^N E\left[(v_t^i-\bar{v}_t^i)\cdot(v_t^j-\bar{v}_t^j)\right]\\
			& : = -2\kappa\psi_m E\left[|v_t^i-\bar{v}_t^i|^2\right]+\mathcal{F}.\\
		\end{aligned}
	\end{align*}
We estimate $\mathcal{F}$ and get 
	\begin{align*}
		\begin{aligned} 
			 \mathcal{F} &= \frac{2\kappa\psi_m}{N^2}\sum_{i,j=1}^N E\left[(v_t^i-\bar{v}_t^i)\cdot(v_t^j-\bar{v}_t^j)\right]\\
			&= -\frac{2\kappa\psi_m}{N^2}\sum_{i,j=1}^NE\left[ (v_t^i-\bar{v}_t^i)\cdot\bar{v}_t^j\right]\\
			&\leq \frac{2\kappa\psi_m}{N^2}\sum_{i,j=1}^N\sqrt{E\left[|\bar{v}_t^j|^2\right]}\sqrt{E\left[|v_t^i-\bar{v}_t^i|^2\right]}\\
			& \leq 2\kappa\psi_m\sqrt{E\left[|\bar{v}_t^j|^2\right]}\sqrt{E\left[|v_t^i-\bar{v}_t^i|^2\right]},
		\end{aligned}
	\end{align*}
	where we used $\sum\limits_{j=1}^N v_t^j = 0$ and  $\sum\limits_{i=1}^Na_i\le\sqrt{N\sum\limits_{i=1}^Na_i^2}$.
	Therefore we get
	\begin{align*}
		\begin{aligned}
			&\mathcal{I}_{31} \leq -2\kappa\psi_m E\left[|v_t^i-\bar{v}_t^i|^2\right]+2\kappa\psi_m\sqrt{E[|\bar{v}_t^j|^2]}\sqrt{E[|v_t^i-\bar{v}_t^i|^2]}.
		\end{aligned}
	\end{align*}
\quad 	Now we estimate $\mathcal{I}_{32}$:
	\begin{align*}
		\begin{aligned}
			\mathcal{I}_{32}&= \frac{2\kappa}{N} E\left[(v_t^i-\bar{v}_t^i) a(\bar{x}_t^i,t)\bar{v}_t^i \right] \leq \frac{2\kappa \psi_M ||f_0||_{L^1}}{N}\sqrt{E\left[|v_t^i-\bar{v}_t^i|^2\right]}\sqrt{E\left[|\bar{v}_t^i|^2\right]}.
		\end{aligned}
	\end{align*}

	Next we estimate $\mathcal{I}_{33}$ as follows:	
	\begin{align*}
		\begin{aligned} 
			&\mathcal{I}_{33} = -\frac{2\kappa}{N}E\left[\sum_{j\neq i}^N(v_t^i-\bar{v}_t^i)\cdot\left(\psi(|x_t^i-x_t^j|) (\bar{v}_t^i-\bar{v}_t^j)-a(\bar{x}_t^i,t)\bar{v}_t^i\right)\right].
		\end{aligned}
	\end{align*}
By symmetry and without loss of generality, we may assume $i=1$. We set
	\begin{align*}
		\begin{aligned} 
			 \Psi^j &:= \psi(|x_t^1-x_t^j|) (\bar{v}_t^1-\bar{v}_t^j)-a(\bar{x}_t^1,t)\bar{v}_t^1\\
			& = \left(\psi(|x_t^1-x_t^j|)-a(\bar{x}_t^1,t)\right)\bar{v}_t^1-\psi(|x_t^1-x_t^j|)\bar{v}_t^j, \quad j=2,\cdots,N.
		\end{aligned}
	\end{align*}
Then we have
	\begin{align*}
		\begin{aligned} 
			&\sum_{j=2}^N |\Psi^j|^2 \leq \sum_{j=2}^N \left(|\psi(|x_t^1-x_t^j|)-a(\bar{x}_t^1,t)||\bar{v}_t^1|+\psi(|x_t^1-x_t^j|)|\bar{v}_t^j|\right)^2\\
			& \leq \sum_{j=2}^N \max \left\{| \psi(|x_t^1-x_t^j|)-a(\bar{x}_t^1,t)|,\psi(|x_t^1-x_t^j|)\right\}^2\left(|\bar{v}_t^1|+|\bar{v}_t^j|\right)^2\\
			&\leq 2 \max \left\{| \psi(|x_t^1-x_t^j|)-a(\bar{x}_t^1,t)|,\psi(|x_t^1-x_t^j|)\right\}^2\left(\sum_{i=1}^N |\bar{v}_t^i|^2+(N-2)|\bar{v}_t^1|^2\right).
		\end{aligned}
	\end{align*}
	This yields 
	\begin{align}
		\begin{aligned} \label{uu}
			& \sum_{j=2}^N E\left[|\Psi^j|^2\right] \leq 4N\psi_M^2\left(1+||f_0||_{L^1}\right)^2E\left[|\bar{v}_t^i|^2\right].\\
		\end{aligned}
	\end{align}
	We now use \eqref{uu} to obtain
	\begin{align*}
		\begin{aligned} 
			 \mathcal{I}_{33} &= -\frac{2\kappa}{N}E\left[\sum_{j= 2}^N(v_t^i-\bar{v}_t^i)\cdot\Psi^j\right]\leq \frac{2\kappa}{N} \sqrt{E\left[|v_t^i-\bar{v}_t^i|^2\right]}\sqrt{E\left[\left|\sum_{j= 2}^N \Psi^j\right|^2\right]}\\
			& \leq \frac{2\kappa \sqrt{N-1}}{N}\sqrt{E\left[|v_t^i-\bar{v}_t^i|^2\right]}\sqrt{\sum_{j= 2}^N E\left[|\Psi^j|^2\right]}\\
			& \leq \frac{4\kappa \psi_M (1+||f_0||_{L^1})\sqrt{N} \sqrt{N-1}}{N}  \sqrt{E\left[|v_t^i-\bar{v}_t^i|^2\right]}\sqrt{ E\left[|\bar{v}_t^i|^2\right]}.
		\end{aligned}
	\end{align*}

	Finally we estimate $\mathcal{I}_{34}$ as follows:	
	\begin{align*}
		\begin{aligned}
			|\mathcal{I}_{34}|&=\left|-\frac{2\kappa}{N}\sum_{j=1}^N E\left[(v_t^i-\bar{v}_t^i)\cdot b(\bar{x}_t^i,t)\right]\right|\le 2\kappa E\left[ |v_t^i-\bar{v}_t^i||b(\bar{x}_t^i,t)|\right]\\
			&\leq 4\kappa\psi_M e^{-\frac{2C_mt}{3}}\sqrt{||f_0||_{L^1} ||(|v|^2+|x|^2)f_0||_{L^1}}\sqrt{E\left[| v_t^i-\bar{v}_t^i|^2\right]}.
		\end{aligned}
	\end{align*}
\end{proof}
Therefore we have
\begin{align}
	\begin{aligned} \label{v}
		 \frac{d}{dt}E\left[ |v_t^i-\bar{v}_t^i|^2\right] \leq& \left(4\kappa \psi_M e^{-\frac{2C_mt}{3}}\sqrt{||f_0||_{L^1} ||(|v|^2+|x|^2)f_0||_{L^1}}\right.\\
		&\left.+4\kappa\psi_M (1\!+\!||f_0||_{L^1}\!)\sqrt{\frac{N-1}{N}}\sqrt{ E[|\bar{v}_t^i|^2]}\!+\!\frac{2\kappa\psi_M ||f_0||_{L^1}}{N}\sqrt{E[|\bar{v}_t^i|^2]}\right.\\
		&\left.+2\kappa\psi_m\sqrt{E[|\bar{v}_t^i|^2]}\right)\sqrt{E[|v_t^i-\bar{v}_t^i|^2]}\\
		&+2(\sigma-\kappa\psi_m) E[(v_t^i-\bar{v}_t^i)^2]\!-2E[(x_t^i-\bar{x}_t^i)\cdot\!(v_t^i-\bar{v}_t^i)].
	\end{aligned}
\end{align}
	\noindent $\bullet$ (Estimate for $E[(x_t^i-\bar{x}_t^i)\cdot(v_t^i-\bar{v}_t^i)]$): Now we estimate the cross term
	\begin{align*}
		\begin{aligned} 
			&d \left[(x_t^i-\bar{x}_t^i)\cdot(v_t^i-\bar{v}_t^i)\right]\\
			&= (v_t^i-\bar{v}_t^i)\cdot d(x_t^i-\bar{x}_t^i)+ (x_t^i-\bar{x}_t^i)\cdot d(v_t^i-\bar{v}_t^i)
			+d(x_t^i-\bar{x}_t^i)\cdot d(v_t^i-\bar{v}_t^i)\\
			&=(v_t^i-\bar{v}_t^i)^2dt-\frac{\kappa}{N}\sum_{j=1}^N\left[\psi(|x_t^j-x_t^i|)(v_t^i-v_t^j)-a(\bar{x}_t^i,t)\bar{v}_t^i+b(\bar{x}_t^i,t)\right]\cdot (x_t^i-\bar{x}_t^i)dt\\
			&\quad- (x_t^i-\bar{x}_t^i)^2dt+\sqrt{2\sigma}(x_t^i-\bar{x}_t^i)\cdot(v_t^i-\bar{v}_t^i)dW_t.
		\end{aligned}
	\end{align*}
We take the expectation to obtain
	\begin{align}
		\begin{aligned} \label{ff}
			& \frac{d}{dt}E\left[(x_t^i-\bar{x}_t^i)\cdot(v_t^i-\bar{v}_t^i)\right]\\
			&= E\left[|v_t^i-\bar{v}_t^i|^2\right]-E\left[|x_t^i-\bar{x}_t^i|^2\right]\\
			& \quad -\frac{\kappa}{N}E\left[\sum_{j=1}^N(\psi(|x_t^j-x_t^i|)(v_t^i-v_t^j)-a(\bar{x}_t^i,t)\bar{v}_t^i+b(\bar{x}_t^i,t))\cdot(x_t^i-\bar{x}_t^i)\right]\\
			&:=  E[|v_t^i-\bar{v}_t^i|^2]-E[|x_t^i-\bar{x}_t^i|^2] + \mathcal{I}_{4},
		\end{aligned}
	\end{align}
where we used
	\begin{align*}
	\begin{aligned} \label{}
		&E\left[(v_t^i-\bar{v}_t^i)\cdot(x_t^i-\bar{x}_t^i)dW_t\right]=0.
	\end{aligned}
\end{align*}

We further decompose the term  $ \mathcal{I}_{4}$ as follows:
	\begin{align*}
		\begin{aligned} 
			&  \mathcal{I}_{4} = -\frac{\kappa}{N}\! E\left[\sum_{j=1}^N(x_t^i-\bar{x}_t^i)\cdot\psi(|x_t^j-x_t^i|)\left((v_t^i-v_t^j)-(\bar{v}_t^i-\bar{v}_t^j)\right)\right]+\frac{\kappa}{N}E\left[(x_t^i-\bar{x}_t^i)\cdot a(\bar{x}_t^i,t)\bar{v}_t^i\right]\\
			& \qquad -\frac{\kappa}{N}E\left[\sum_{j\neq i}^N(x_t^i-\bar{x}_t^i)\cdot\psi(|x_t^j-x_t^i|)\left((\bar{v}_t^i-\bar{v}_t^j)-a(\bar{x}_t^i,t)\bar{v}_t^i\right)\right]-\frac{\kappa}{N}E\left[\sum\limits_{j=1}^N(x_t^i-\bar{x}_t^i)\cdot b(\bar{x}_t^i,t)\right]\\
			& \quad:= \mathcal{I}_{41} + \mathcal{I}_{42}+ \mathcal{I}_{43}+ \mathcal{I}_{44} .  
		\end{aligned}
	\end{align*}
\begin{lemma}
	The terms $\mathcal{I}_{4i}$, $i=1,\cdots,4$, satisfy the following estimates:
	\begin{align*}
		\begin{aligned}
			& \mathcal{I}_{41}(t) \leq \frac{1}{2}E[|x_t^i-\bar{x}_t^i|^2]+2(\kappa \psi_M)^2 E[|v_t^i-\bar{v}_t^i|^2],\\
			& \mathcal{I}_{42}(t) \leq \frac{\kappa\psi_M||f_0||_{L^1}}{N}\sqrt{E[|x_t^i-\bar{x}_t^i|^2]}\sqrt{E[|\bar{v}_t^i|^2]},\\
			& \mathcal{I}_{43}(t) \leq 2\kappa \sqrt{\frac{N-1}{N}}\psi_M(1+||f_0||_{L^1})\sqrt{E[|x_t^i-\bar{x}_t^i|^2]}\sqrt{E[|\bar{v}_t^i|^2]} ,\\
			& \mathcal{I}_{44}(t) \leq 2\kappa \psi_M e^{-\frac{2C_mt}{3}}\sqrt{||f_0||_{L^1}||(|v|^2+|x|^2)f_0||_{L^1}}\sqrt{E[|x_t^i-\bar{x}_t^i|^2]}.
		\end{aligned}
	\end{align*}
\end{lemma}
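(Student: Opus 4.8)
The plan is to handle the four pieces $\mathcal{I}_{41},\dots,\mathcal{I}_{44}$ in exact parallel to the preceding lemma for $\mathcal{I}_{3i}$, the structural differences being that the leading factor is now the position difference $x_t^i-\bar{x}_t^i$ (so each bound carries $\sqrt{E[|x_t^i-\bar{x}_t^i|^2]}$ instead of $\sqrt{E[|v_t^i-\bar{v}_t^i|^2]}$) and that the prefactor is $\kappa/N$ rather than $2\kappa/N$, which halves the constants in the last three pieces. Throughout I would use the uniform bounds $\psi_m\le\psi\le\psi_M$, the bounds $\psi_m||f_0||_{L^1}\le a(x,t)\le\psi_M||f_0||_{L^1}$ and $|b(x,t)|\le 2\psi_M e^{-2C_mt/3}\sqrt{||f_0||_{L^1}||(|v|^2+|x|^2)f_0||_{L^1}}$ recorded before \eqref{MK2}, the exchangeability of the particles (so that $E[|v_t^i-\bar{v}_t^i|^2]$ and $E[|\bar{v}_t^i|^2]$ are independent of $i$), Cauchy--Schwarz in expectation $E[XY]\le\sqrt{E[X^2]}\sqrt{E[Y^2]}$, and the discrete inequality $\sum_i a_i\le\sqrt{N\sum_i a_i^2}$.

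For $\mathcal{I}_{41}$ I would not symmetrize (the mixed position--velocity structure precludes the cancellation used for $\mathcal{I}_{31}$); instead I rewrite the bracket as the consistency difference $(v_t^i-v_t^j)-(\bar{v}_t^i-\bar{v}_t^j)=(v_t^i-\bar{v}_t^i)-(v_t^j-\bar{v}_t^j)$, bound $\psi\le\psi_M$, and apply Young's inequality with weight $1/(\kappa\psi_M)$ to the product $|x_t^i-\bar{x}_t^i|\,|(v_t^i-\bar{v}_t^i)-(v_t^j-\bar{v}_t^j)|$. Carrying out the $\frac{1}{N}\sum_j$ and using the crude exchangeability estimate $E[|(v_t^i-\bar{v}_t^i)-(v_t^j-\bar{v}_t^j)|^2]\le 4E[|v_t^i-\bar{v}_t^i|^2]$ yields the clean bound $\frac{1}{2}E[|x_t^i-\bar{x}_t^i|^2]+2(\kappa\psi_M)^2E[|v_t^i-\bar{v}_t^i|^2]$, with no surviving $1/N$ since both factors are order-one differences. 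For $\mathcal{I}_{42}$ I insert $a\le\psi_M||f_0||_{L^1}$ and apply Cauchy--Schwarz in expectation, keeping the $1/N$ prefactor. For $\mathcal{I}_{44}$ I use that $b(\bar{x}_t^i,t)$ is independent of $j$, so $\frac{\kappa}{N}\sum_{j=1}^N$ collapses to $\kappa$; then the exponential bound on $|b|$ together with $E[|x_t^i-\bar{x}_t^i|]\le\sqrt{E[|x_t^i-\bar{x}_t^i|^2]}$ gives the stated decaying estimate.

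The delicate term is $\mathcal{I}_{43}$, the propagation-of-chaos piece in which the empirical alignment force is compared against the nonlinear mean-field drift $a(\bar{x}_t^i,t)\bar{v}_t^i$. Mirroring the treatment of $\mathcal{I}_{33}$, I would set $i=1$ by symmetry, introduce $\Phi^j:=\psi(|x_t^1-x_t^j|)(\bar{v}_t^1-\bar{v}_t^j)-a(\bar{x}_t^1,t)\bar{v}_t^1=(\psi(|x_t^1-x_t^j|)-a(\bar{x}_t^1,t))\bar{v}_t^1-\psi(|x_t^1-x_t^j|)\bar{v}_t^j$, and derive the second-moment bound $\sum_{j=2}^N E[|\Phi^j|^2]\le 4N\psi_M^2(1+||f_0||_{L^1})^2 E[|\bar{v}_t^i|^2]$ from $\psi\le\psi_M$, $a\le\psi_M||f_0||_{L^1}$, and exchangeability. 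Two nested applications of Cauchy--Schwarz, namely $\mathcal{I}_{43}\le\frac{\kappa}{N}\sqrt{E[|x_t^1-\bar{x}_t^1|^2]}\sqrt{E[|\sum_{j=2}^N\Phi^j|^2]}$ followed by $E[|\sum_{j=2}^N\Phi^j|^2]\le(N-1)\sum_{j=2}^N E[|\Phi^j|^2]$, then produce the factor $\sqrt{N(N-1)}/N=\sqrt{(N-1)/N}$ and hence the asserted bound $2\kappa\sqrt{(N-1)/N}\,\psi_M(1+||f_0||_{L^1})\sqrt{E[|x_t^i-\bar{x}_t^i|^2]}\sqrt{E[|\bar{v}_t^i|^2]}$. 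The main obstacle is keeping the $N$-dependence transparent through these nested steps so that the chaos factor $\sqrt{(N-1)/N}$ emerges with the correct constant; this is exactly the place where Lemma~\ref{i5}'s decay $E[|\bar{v}_t^i|^2]\le Ce^{-4C_\ast t/3}$ will later be substituted to close the Gr\"onwall argument for Theorem~\ref{4.1}.
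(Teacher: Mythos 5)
Your proposal is correct and takes essentially the same route as the paper's proof: the same non-symmetrized treatment of $\mathcal{I}_{41}$ via the consistency rewriting plus weighted Young's inequality, the same direct Cauchy--Schwarz bounds for $\mathcal{I}_{42}$ and $\mathcal{I}_{44}$, and the same nested Cauchy--Schwarz argument for $\mathcal{I}_{43}$ using the identical second-moment bound $\sum_{j=2}^N E[|\Phi^j|^2]\le 4N\psi_M^2(1+\|f_0\|_{L^1})^2E[|\bar v_t^i|^2]$ (the paper's $\Psi^j$). The only cosmetic difference is that in $\mathcal{I}_{41}$ you bundle the two velocity differences via $E[|(v_t^i-\bar v_t^i)-(v_t^j-\bar v_t^j)|^2]\le 4E[|v_t^i-\bar v_t^i|^2]$ rather than splitting the sum into two terms first, which reproduces the same constants.
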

\begin{proof}
	Now we first estimate $\mathcal{I}_{41}$ as follows:
	\begin{align*}
		\begin{aligned} 
			\mathcal{I}_{41}& = -\frac{\kappa}{N} E[\sum_{j=1}^N(x_t^i-\bar{x}_t^i)\cdot\psi(|x_t^i-x_t^j|)\left((v_t^i-v_t^j)-(\bar{v}_t^i-\bar{v}_t^j)\right)]\\
			&\leq \frac{\kappa \psi_M}{N}\sum_{j=1}^N E\left[|(x_t^i-\bar{x}_t^i)\cdot(v_t^i-\bar{v}_t^i)|+|(x_t^i-\bar{x}_t^i)\cdot(v_t^j-\bar{v}_t^j)|\right]\\
			&\leq \kappa \psi_M \left[\sqrt{E[ |x_t^i-\bar{x}_t^i|^2]} \sqrt{E[ |v_t^i-\bar{v}_t^i|^2]}+\sqrt{E[ |x_t^i-\bar{x}_t^i|^2]} \sqrt{E[ |v_t^j-\bar{v}_t^j|^2]}\right]\\
			&\leq \kappa \psi_M \left[\frac{1}{4\kappa \psi_M }E\left[|x_t^i-\bar{x}_t^i|^2\right]+\kappa \psi_M E\left[|v_t^i-\bar{v}_t^i|^2\right] +\frac{1}{4\kappa \psi_M}E\left[|x_t^i-\bar{x}_t^i|^2\right]+\kappa \psi_M E\left[|v_t^i-\bar{v}_t^i|^2\right]\right]\\
			&\leq \frac{1}{2}E\left[|x_t^i-\bar{x}_t^i|^2\right]+2(\kappa \psi_M)^2 E\left[|v_t^i-\bar{v}_t^i|^2\right],
		\end{aligned}
	\end{align*}
where we used the young equality.

Then we estimate $\mathcal{I}_{42}$:
	\begin{align*}
		\begin{aligned}
			&  \mathcal{I}_{42} = \frac{\kappa}{N} E\left[(x_t^i-\bar{x}_t^i)\cdot a(\bar{x}_t^i,t)\bar{v}_t^i\right]
\leq \frac{\kappa\psi_M||f_0||_{L^1}}{N}\sqrt{E[|\bar{v}_t^i|^2]}\sqrt{E[|x_t^i-\bar{x}_t^i|^2]}.
		\end{aligned}
	\end{align*}
\quad Next we use the same way to estimate $\mathcal{I}_{43}$. By \eqref{uu}, we obtain
\begin{align*}
	\begin{aligned} \label{}
		 \mathcal{I}_{43} &= -\frac{\kappa}{N} E\left[\sum_{j= 2}^N(x_t^i-\bar{x}_t^i)\cdot \Psi^j\right] \leq \frac{\kappa}{N} \sqrt{E\left[|x_t^i-\bar{x}_t^i|^2\right]}\sqrt{E\left[\left|\sum_{j= 2}^N \Psi^j\right|^2\right]}\\
		&\leq \frac{\kappa \sqrt{N-1}}{N} \sqrt{E\left[|x_t^i-\bar{x}_t^i|^2\right]}\sqrt{\sum_{j= 2}^N E\left[|\Psi^j|^2\right]}\\
		&\leq 2\kappa \sqrt{\frac{N-1}{N}}  \psi_M (1+||f_0||_{L^1}) \sqrt{E\left[|x_t^i-\bar{x}_t^i|^2\right]}\sqrt{ E\left[|\bar{v}_t^i|^2\right]}.\\
	\end{aligned}
\end{align*}

Finally we estimate $\mathcal{I}_{44}$:
	\begin{align*}
		\begin{aligned} 
			& |\mathcal{I}_{44}|=\left|\frac{\kappa}{N} E\left[\sum_{j=1}^N(x_t^i-\bar{x}_t^i)\cdot b(\bar{x}_t^i,t)\right]\right| \leq \kappa \sqrt{E\left[| b(\bar{x}_t^i,t)|^2\right]}\sqrt{E\left[|x_t^i-\bar{x}_t^i|^2\right]}\\
			&\quad  \leq2 \kappa \psi_M e^{-\frac{2C_mt}{3}}\sqrt{||f_0||_{L^1}||(|v|^2+|x|^2)f_0||_{L^1}}\sqrt{E[|x_t^i-\bar{x}_t^i|^2]}.
		\end{aligned}
	\end{align*}
\end{proof}	
Therefore we have
	\begin{align}
	\begin{aligned} \label{xv}
	&\frac{d}{dt}E[(x_t^i-\bar{x}_t^i)\cdot (v_t^i-\bar{v}_t^i)]\leq (1+2(\kappa \psi_M)^2)E[|v_t^i-\bar{v}_t^i|^2]- \frac{1}{2}E[|x_t^i-\bar{x}_t^i|^2]\\
		&\quad+ \left(\frac{\kappa\psi_M||f_0||_{L^1}}{N}\sqrt{E[|\bar{v}_t^i|^2]} + 2\kappa \sqrt{\frac{N-1}{N}}  \psi_M (1+||f_0||_{L^1}) \sqrt{ E[|\bar{v}_t^i|^2]}\right.\\
		&\left.\quad+2\kappa \psi_M e^{-\frac{2C_mt}{3}}\sqrt{||f_0||_{L^1}||(|v|^2+|x|^2)f_0||_{L^1}}\right)\sqrt{E[|x_t^i-\bar{x}_t^i|^2]}.\\
	\end{aligned}
\end{align}

Now we take a combination $\frac{1}{2} \eqref{x} +\frac{1}{2} \eqref{v}+ \epsilon \eqref{xv} $ to obtain
	\begin{align*}
		\begin{aligned}
		&\frac{d}{dt}E[\frac{1}{2}|x_t^i-\bar{x}_t^i|^2+\frac{1}{2}|v_t^i-\bar{v}_t^i|^2+\epsilon (x_t^i-\bar{x}_t^i)\cdot (v_t^i-\bar{v}_t^i)]\\
		&\leq -(\kappa\psi_m-\sigma-(1+2(\kappa \psi_M)^2)\epsilon)E[|v_t^i-\bar{v}_t^i|^2]-\frac{\epsilon}{2}E[|x_t^i-\bar{x}_t^i|^2]\\
		&\quad +\left[\left(2\kappa \psi_M (1+||f_0||_{L^1})\sqrt{\frac{N-1}{N}} +\frac{\kappa \psi_M ||f_0||_{L^1}}{N}+\kappa\psi_m\right)\sqrt{E[|\bar{v}_t^i|^2]}\right.\\
		&\left. \qquad+2\kappa \psi_M \sqrt{||f_0||_{L^1} ||(|v|^2+|x|^2)f_0||_{L^1}}e^{-\frac{2C_mt}{3}} \right]\sqrt{E[|v_t^i-\bar{v}_t^i|^2]}\\
		&\quad +\left[\left(2\kappa \psi_M (1+||f_0||_{L^1})\sqrt{\frac{N-1}{N}}+\frac{\kappa\psi_M||f_0||_{L^1}}{N} \right) \sqrt{ E[|\bar{v}_t^i|^2]}\right.\\
		&\left. \qquad +2\kappa \psi_M \sqrt{||f_0||_{L^1}||(|v|^2+|x|^2)f_0||_{L^1}}e^{-\frac{2C_mt}{3}} \right] \epsilon\sqrt{E[|x_t^i-\bar{x}_t^i|^2]}.
		\end{aligned}
	\end{align*}	
We take $\epsilon = \min\{\frac{\kappa\psi_m -\sigma}{2(1+2(\kappa\psi_M)^2)},\frac{1}{2} \} $ and by Lemma \ref{i5} we get
	\begin{align*}
		\begin{aligned} 
			&\frac{d}{dt}E\left[\frac{1}{2}|x_t^i-\bar{x}_t^i|^2+\frac{1}{2}|v_t^i-\bar{v}_t^i|^2+\epsilon (x_t^i-\bar{x}_t^i)\cdot (v_t^i-\bar{v}_t^i)\right]\\
			&\leq -C_1\left(E[|x_t^i-\bar{x}_t^i|^2]+E[|v_t^i-\bar{v}_t^i|^2]\right)\\
			&\quad+C_2(e^{-\frac{2}{3}C_m t}+e^{-\frac{2}{3}C_\ast t})\sqrt{\left(E[|x_t^i-\bar{x}_t^i|^2]+E[|v_t^i-\bar{v}_t^i|^2]\right)}\\
		\end{aligned}
	\end{align*}
where $ C_1 = \min\{ \frac{\epsilon}{2}, \frac{\kappa\psi_m-\sigma}{2}\}$ and $C_2$ depends on $\kappa$, $\psi_M$ and $f_0$.

Since $|\epsilon|<\frac{1}{2}$, we have 
\begin{align}
	\begin{aligned} \label{i}
    	&\frac{d}{dt}E\left[\frac{1}{2}|x_t^i-\bar{x}_t^i|^2+\frac{1}{2}|v_t^i-\bar{v}_t^i|^2+\epsilon (x_t^i-\bar{x}_t^i)\cdot (v_t^i-\bar{v}_t^i) \right]\\
		&\leq -\frac{4C_1}{3}E \left [\frac{1}{2}|x_t^i-\bar{x}_t^i|^2+\frac{1}{2}|v_t^i-\bar{v}_t^i|^2+\epsilon (x_t^i-\bar{x}_t^i)\cdot (v_t^i-\bar{v}_t^i)\right]\\
		&\quad+\frac{4\sqrt{3}}{3}C_2(e^{-\frac{2}{3}C_m  t}\!+\! e^{-\frac{2}{3}C_\ast t})\sqrt{E \left [\frac{1}{2}|x_t^i-\bar{x}_t^i|^2\!+\!\frac{1}{2}|v_t^i-\bar{v}_t^i|^2\!+\!\epsilon (x_t^i-\bar{x}_t^i)\!\cdot\! (v_t^i-\bar{v}_t^i)\right]}.
	\end{aligned}
\end{align}
We set 
\begin{align*}
	\begin{aligned} 
		& Z(t):= E\left[\frac{1}{2}|x_t^i-\bar{x}_t^i|^2+\frac{1}{2}|v_t^i-\bar{v}_t^i|^2+\epsilon (x_t^i-\bar{x}_t^i)\cdot (v_t^i-\bar{v}_t^i)\right].
	\end{aligned}
\end{align*}
Then, it follows from \eqref{i} that $Z(t)$ satisfies
\begin{align*}
	\begin{aligned} 
		& \frac{dZ(t)}{dt}\leq -\frac{4C_1}{3}Z(t)+\frac{4\sqrt{3}}{3}C_2(e^{-\frac{2}{3}C_m t}+e^{-\frac{2}{3}C_\ast t})\sqrt{Z(t)}.
	\end{aligned}
\end{align*}
By the general Gr\"onwall's inequality, we have
\begin{align*}
	\begin{aligned} 
		&  Z(t) \leq Ce^{-C_3 t},
	\end{aligned}
\end{align*}
which means 
	\begin{align*}
		\begin{aligned} 
			& E[|x_t^i-\bar{x}_t^i|^2]+E[|v_t^i-\bar{v}_t^i|^2] \leq Ce^{-C_3 t}
		\end{aligned}
	\end{align*}	
	for some general constants $C$ and $C_3$ depend on initial configuration, $\psi$, $\kappa$ and $\sigma$.

\subsection{Finite-in-time propagation of chaos}
 In this subsection, we state the  local-in-time mean-field limit.
 \begin{theorem}\lbl{t4.2}
Suppose that the communication weight function $\psi$, $\kappa$, $\sigma$ and initial configuration $f_0$ satisfy the conditions: there exist positive constants $\psi_m, \psi_M, C$ such that
\begin{align*}
	\begin{aligned} 
		& 0< \psi_m \leq \psi \leq \psi_M, \quad \kappa\psi_m\min\{||f_0||_{L^1},1\}>d\sigma, \quad  \int_{\mathbb{R}^{2d}}(e^{C|v|^2}+x^2)f_0dxdv<\infty
	\end{aligned}
\end{align*}
and let $(x_t^i,v_t^i)$ and $(\bar{x}_t^i,\bar{v}_t^i,f)$ be the solution processes to the systems \eqref{SCS} and \eqref{MK2} respectively.
Then,  for any finite time interval $[0,T]$ and $N\ge 1$, we have 
	\begin{align*}
		\begin{aligned}
			&E[|x_t^i-\bar{x}_t^i|^2]+E[|v_t^i-\bar{v}_t^i|^2] \leq \frac{C}{N^{e^{-Ct}}},
		\end{aligned}
	\end{align*}
where $C$ is a general positive constant independent  of $N$.
\end{theorem}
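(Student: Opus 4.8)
The plan is to follow the coupling strategy of \cite{BC,HKR}. Since $(x_t^i,v_t^i)$ and $(\bar x_t^i,\bar v_t^i)$ solve \eqref{SCS} and \eqref{MK2} with the \emph{same} Brownian motion and the \emph{same} initial datum, the differences vanish at $t=0$, and it suffices to track $\mathfrak L(t)=E[|x_t^i-\bar x_t^i|^2+|v_t^i-\bar v_t^i|^2]$, or equivalently its symmetrized version $\tilde{\mathfrak L}(t)$ carrying the cross term $\epsilon(x_t^i-\bar x_t^i)\cdot(v_t^i-\bar v_t^i)$. As in the proof of Theorem \ref{4.1}, I would apply It\^o's formula to $|x_t^i-\bar x_t^i|^2$, to $|v_t^i-\bar v_t^i|^2$ and to the cross term, take expectations so that the stochastic integrals drop out, and combine the three identities with weights $\tfrac12,\tfrac12,\epsilon$.

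The decisive difference from Theorem \ref{4.1} lies in the treatment of the interaction discrepancy
\[
\frac{\kappa}{N}\sum_{j=1}^N\psi(|x_t^j-x_t^i|)(v_t^i-v_t^j)-\kappa\big(a(\bar x_t^i,t)\bar v_t^i-b(\bar x_t^i,t)\big).
\]
I would split it into a \textbf{coupling part}, obtained by replacing the particle arguments with the McKean arguments, plus a \textbf{fluctuation part}
\[
\frac{\kappa}{N}\sum_{j=1}^N\psi(|\bar x_t^j-\bar x_t^i|)(\bar v_t^i-\bar v_t^j)-\kappa\big(a(\bar x_t^i,t)\bar v_t^i-b(\bar x_t^i,t)\big),
\]
which, conditionally on $(\bar x_t^i,\bar v_t^i)$, is an empirical average of i.i.d.\ mean-zero terms, the McKean particles being independent copies distributed according to $f$. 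Unlike the bound for $\mathcal I_{33}$ used in Theorem \ref{4.1}, here I would \emph{extract the $1/N$ gain}: the conditional variance of such an average is $O(1/N)$, so $E[|\text{fluctuation}|^2]\le C/N$, with a constant governed by a velocity moment of $f$. This is precisely where the hypothesis $\int_{\mathbb R^{2d}}(e^{C|v|^2}+x^2)f_0\,dxdv<\infty$ enters: one first propagates a (time-dependent) exponential velocity moment along \eqref{km}; under the multiplicative noise $\sqrt{2\sigma}\,v\,dW$ this moment obeys a geometric-Brownian-motion type bound and stays finite only on bounded intervals, which is what restricts the statement to a \emph{finite} horizon $[0,T]$.

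The main obstacle is the coupling part, because the velocity differences $v_t^i-v_t^j$ are unbounded while $\psi$ is only assumed bounded and continuous, not quantitatively Lipschitz. Following \cite{BC}, I would introduce a velocity truncation at a level $R$: on $\{|v|\le R\}$ the interaction is effectively Lipschitz with a constant growing in $R$, whereas the tail $\{|v|>R\}$ is dominated by the Gaussian moment, i.e.\ by a factor $e^{-cR^2}$. Balancing these forces one to choose $R\sim\sqrt{\log(1/\mathfrak L(t))}$, which converts the linear Gr\"onwall estimate of Theorem \ref{4.1} into a nonlinear one of the form
\[
\mathfrak L'(t)\le C\,\mathfrak L(t)\log\frac{1}{\mathfrak L(t)}+\frac{C}{N}.
\]
Solving this via the substitution $w(t)=\log(1/\mathfrak L(t))$, which linearizes the logarithmic term into $w'\ge -Cw$ and hence $w(t)\ge w(0)e^{-Ct}$, yields $\mathfrak L(t)\lesssim N^{-e^{-Ct}}$, exactly the claimed degenerating rate $C/N^{e^{-Ct}}$. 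I expect the genuinely delicate points to be (i) making the truncation and log-Gr\"onwall step rigorous, with an auxiliary additive regularization keeping $w$ well defined despite $\mathfrak L(0)=0$ (the $C/N$ term providing the seed that bounds $\mathfrak L$ away from zero), and (ii) verifying the finite-time propagation of the exponential velocity moment underpinning both the fluctuation estimate and the tail control, in the presence of the multiplicative noise.
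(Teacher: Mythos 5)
Your proposal is correct and follows essentially the same route as the paper: both use the synchronous coupling of \eqref{SCS} and \eqref{MK2}, the weighted functional $\tilde{\mathfrak L}$ with the cross term $\epsilon(x_t^i-\bar{x}_t^i)\cdot(v_t^i-\bar{v}_t^i)$ to absorb the harmonic force, a split of the interaction discrepancy into a coupling part (bounded, via the truncation/exponential-moment argument of \cite{BC}, by $C(1+r)\mathfrak L(t)+Ce^{-r}$) and a conditionally mean-zero fluctuation part handled by a law-of-large-numbers variance estimate yielding the $O(N^{-1/2})\sqrt{\mathfrak L(t)}$ gain, and finally the log-Gr\"onwall inequality $\mathfrak L'\le C\mathfrak L\log(1/\mathfrak L)+C/N$ solved to give $\mathfrak L(t)\le CN^{-e^{-Ct}}$. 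The only difference is presentational: the paper further isolates the diagonal $j=i$ self-interaction term $\mathcal{I}_{3b}$ (of size $CN^{-1}\sqrt{\mathfrak L(t)}$) and delegates the truncation and log-Gr\"onwall details to the proof of Theorem 1.1 in \cite{BC}, whereas you sketch those steps, including the exponential velocity-moment propagation that confines the result to a finite horizon, explicitly.
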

  The local-in-time mean-field limit can  be constructed by using a similar argument in Theorem 1.1 in \cite{BC}. Here we include some details for the sake of the reader. 
 \begin{proof}
 \noindent $\bullet$ (Estimate for $\frac{d}{dt}E\left[|x_t^i-\bar{x}_t^i|^2\right]$): By \eqref{x}, we have
	\begin{align}
		\begin{aligned} \label{l-x}
			&\frac{d}{dt}E[|x_t^i-\bar{x}_t^i|^2] = 2 E[(x_t^i-\bar{x}_t^i)\cdot (v_t^i-\bar{v}_t^i)].
		\end{aligned}
	\end{align}
	\noindent $\bullet$ (Re-estimate for $\frac{d}{dt}E\left[|v_t^i-\bar{v}_t^i|^2\right]$): By \eqref{hh}, we have
              \begin{align*}
		\begin{aligned} 
			&\frac{d}{dt} E\left[|v_t^i-\bar{v}_t^i|^2\right]\\
			& =-\frac{2\kappa}{N}E\left[\sum_{j=1}^N(v_t^i-\bar{v}_t^i)\cdot\left(\psi(|x_t^j-x_t^i|) (v_t^i-v_t^j)-a(\bar{x}_t^i,t)\bar{v}_t^i+b(\bar{x}_t^i,t)\right)\right]\\
			&\quad +2\sigma E\left[|v_t^i-\bar{v}_t^i|^2\right]-2E\left[(x_t^i-\bar{x}_t^i)(v_t^i-\bar{v}_t^i)\right]\\
			&:= \mathcal{I}_3+2\sigma E\left[|v_t^i-\bar{v}_t^i|^2\right]-2E\left[(x_t^i-\bar{x}_t^i)\cdot(v_t^i-\bar{v}_t^i)\right].
		\end{aligned}
	 \end{align*}
Here we re-decompose the term $\mathcal{I}_3$ as follows:
	\begin{align*}
		\begin{aligned} 
			\mathcal{I}_3 =& -\frac{2\kappa}{N}E\left[\sum_{j=1}^N(v_t^i-\bar{v}_t^i)\cdot\left(\psi(|x_t^j-x_t^i|) (v_t^i-v_t^j)-\psi(|\bar{x}_t^j-\bar{x}_t^i|)(\bar{v}_t^i-\bar{v}_t^j)\right)\right]\\
			&+\frac{2\kappa}{N} E\left[(v_t^i-\bar{v}_t^i) \cdot\left( 0-\psi\ast f(\bar{x}_t^j,\bar{v}_t^i)\right)\right]\\
			&-\frac{2\kappa}{N}E\left[\sum_{j\neq i}^N(v_t^i-\bar{v}_t^i)\cdot \left(\psi(|\bar{x}_t^j-\bar{x}_t^i|)(\bar{v}_t^i-\bar{v}_t^j)-\psi\ast f(\bar{x}_t^j,\bar{v}_t^i)\right)\right]\\
			 \quad:=& \mathcal{I}_{3a}+\mathcal{I}_{3b}+\mathcal{I}_{3c}.
		\end{aligned}
	\end{align*}
	
	Similar to  \cite{BC}, we conclude that given $T > 0$, there exists $C>0$ such that
	\begin{align*}
	\begin{aligned}
	 &\mathcal{I}_{3a}\le C(1+r)	\mathfrak L(t)+Ce^{-r},\\
	 & \mathcal{I}_{3b}\le \frac{C}{N}\sqrt{\mathfrak L(t)}
	\end{aligned}
	\end{align*}
	for all $r>0$ and all $0\le t\le T$.
	
	 The term $\mathcal{I}_{3b}$ can be  treated  as follows by a law of large numbers argument.
	  By symmetry  that the quantity  is independent of the label $i$, we assume that $i=1$. We start by applying that Cauchy-Schwartz inequality to obtain  
	\begin{align*}
	\begin{aligned}
	  &\mathcal{I}_{3c}\le\frac{1}{N}\sqrt{E[|v_t^i-\bar{v}_t^i|^2]}\sqrt{E\left[\left(\sum_{j=2}^N Y^j\right)^2\right]},
	\end{aligned}
	\end{align*}
	where
	$Y^j=\psi(|\bar{x}_t^j-\bar{x}_t^i|)(\bar{v}_t^i-\bar{v}_t^j)-\psi\ast f(\bar{x}_t^j,\bar{v}_t^i)$ for $j\ge 2$. Note that for $j\ne k$, by independence of the $N$ process $(\bar{x}_t^i,\bar{v}_t^i)$ and the same probability distribution $f$, 
	we  have $E[Y^j\cdot Y^i]=0$.
	 Then 
	 \begin{align*}
	\begin{aligned}
	  & \mathcal{I}_{3c}\!\le\!\frac{1}{N}\sqrt{E[|v_t^i-\bar{v}_t^i|^2]}\sqrt{E\left[\left(\sum_{j=2}^N Y^j\right)^2\right]}\\
	  &\quad \le\frac{1}{N}\sqrt{E[|v_t^i-\bar{v}_t^i|^2]}\sqrt{(N\!-\!1)E[|Y^2|^2]}\\
	  &\quad \le \frac{C}{\sqrt{N}}\sqrt{\mathfrak L(t)}.
	\end{aligned}
	\end{align*}

	 Hence, we have 
	  \begin{align}
		\begin{aligned}  \label{l-v}
			&\frac{d}{dt} E[|v_t^i-\bar{v}_t^i|^2] =-\frac{2\kappa}{N}E\left[\sum_{j=1}^N(v_t^i-\bar{v}_t^i)\cdot \left(\psi(|x_t^j-x_t^i|) (v_t^i-v_t^j)-a(\bar{x}_t^i,t)\bar{v}_t^i+b(\bar{x}_t^i,t)\right)\right]\\
			&\quad +2\sigma E[|v_t^i-\bar{v}_t^i|^2]-2E[(x_t^i-\bar{x}_t^i)\cdot (v_t^i-\bar{v}_t^i)]\\
			&\leq 2\sigma E[|v_t^i\!-\!\bar{v}_t^i|^2]\!-\!2E[(x_t^i\!-\!\bar{x}_t^i)\cdot(v_t^i\!-\!\bar{v}_t^i)]\!+\!C(1\!+\!r)\mathfrak{L}(t)\!+\!Ce^{-r}\!+\!\frac{C\sqrt{\mathfrak L(t)}}{\sqrt{N}}.
		\end{aligned}
	 \end{align}
	\noindent $\bullet$ (Re-estimate for $\frac{d}{dt}E[(x_t^i-\bar{x}_t^i)\cdot (v_t^i-\bar{v}_t^i)]$): By \eqref{ff}, we have 
	     \begin{align*}
		\begin{aligned}
			& \frac{d}{dt}E[(x_t^i-\bar{x}_t^i)\cdot (v_t^i-\bar{v}_t^i)]= E[|v_t^i-\bar{v}_t^i|^2]-E[|x_t^i-\bar{x}_t^i|^2]\\
			& \quad -\frac{\kappa}{N}E[\sum_{j=1}^N(\psi(|x_t^j-x_t^i|)(v_t^i-v_t^j)-a(\bar{x}_t^i,t)\bar{v}_t^i+b(\bar{x}_t^i,t))\cdot (x_t^i-\bar{x}_t^i)]\\
			&:=  E[|v_t^i-\bar{v}_t^i|^2]-E[|x_t^i-\bar{x}_t^i|^2] + \mathcal{I}_{4}
		\end{aligned}
	\end{align*}
 and now we re-decompose the term  $ \mathcal{I}_{4}$ as follows:
	\begin{align*}
		\begin{aligned} 
			\mathcal{I}_4 =& -\frac{\kappa}{N}E\left[\sum_{j=1}^N(x_t^i-\bar{x}_t^i)\cdot(\psi(|x_t^j-x_t^i|) (v_t^i-v_t^j)-\psi(|\bar{x}_t^j-\bar{x}_t^i|)(\bar{v}_t^i-\bar{v}_t^j))\right]\\
			&+\frac{\kappa}{N} E[(x_t^i-\bar{x}_t^i)\cdot ( 0-\psi\ast f(\bar{x}_t^j,\bar{v}_t^i))]\!-\!\frac{\kappa}{N}E\left[\sum_{j\neq i}^N(x_t^i\!-\!\bar{x}_t^i)\!\cdot\!(\psi(|\bar{x}_t^j-\bar{x}_t^i|)(\bar{v}_t^i-\bar{v}_t^j)\!-\!\psi\ast f(\bar{x}_t^j,\bar{v}_t^i))\right]\\
			 \quad:=& \mathcal{I}_{4a}+\mathcal{I}_{4b}+\mathcal{I}_{4c}.
		\end{aligned}
	\end{align*}
Then we get the following estimate
	 \begin{align}
		\begin{aligned} \label{l-xv}
			& \frac{d}{dt}E[(x_t^i-\bar{x}_t^i)\cdot (v_t^i-\bar{v}_t^i)]\\
			&\le  E[|v_t^i-\bar{v}_t^i|^2]-E[|x_t^i-\bar{x}_t^i|^2] +C(1+r)\mathfrak{L}(t)+Ce^{-r}+\frac{C}{\sqrt{N}}\sqrt{\mathfrak{L}(t)}.
		\end{aligned}
	\end{align}
	
	 Combining \eqref{l-x}, \eqref{l-v} and \eqref{l-xv}, we have 
 \begin{align*}
		\begin{aligned} 
			 \frac{d}{dt}\tilde{\mathfrak{L}}(t)&\le  C(1+r)\mathfrak{L}(t)+Ce^{-r}+\frac{C}{\sqrt{N}}\sqrt{\mathfrak{L}(t)}\\
			 &\le C(1+r)\tilde{\mathfrak L}(t)+Ce^{-r}+\frac{C}{N}.
		\end{aligned}
	\end{align*}
	 Therefore, by the proof of Theorem 1.1 in \cite{BC}  we prove that
	 \begin{align*}
		\begin{aligned} 
		   \tilde{\mathfrak L}(t)\le  CN^{-e^{-Ct}},
		\end{aligned}
	\end{align*}
	which is equivalent to   \begin{align*}
		\begin{aligned} 
			\mathfrak L(t) \le  CN^{-e^{-Ct}}
		\end{aligned}
	\end{align*}
	 for $0\le t\le T$. 
 \end{proof}
 \subsection{Uniform-in-time mean-field limit}
  We are now ready to state the main result for this section:
  \begin{theorem}
Suppose that the communication weight function $\psi,\kappa,\sigma$ and initial configuration $f_0$ satisfy the conditions: there exist positive constants $\psi_m, \psi_M, C$ such that
\begin{align*}
	\begin{aligned} 
		& 0 < \psi_m \leq \psi \leq \psi_M, \quad \kappa\psi_m\min\{||f_0||_{L^1},1\}> d\sigma, \quad  \int_{\mathbb{R}^{2d}}(e^{C|v|^2}+x^2)f_0dxdv<\infty
	\end{aligned}
\end{align*}
and let $(x_t^i,v_t^i)$ and $(\bar{x}_t^i,\bar{v}_t^i,f)$ be the solution processes to the systems \eqref{SCS} and \eqref{MK2} respectively.
Then  we have 
	\begin{align*}
		\begin{aligned} 
			 \varlimsup\limits_{N\to+\infty}\sup\limits_{0\le t<+\infty}(E[|x_t^i-\bar{x}_t^i|^2]+E[|v_t^i-\bar{v}_t^i|^2])=0.
		\end{aligned}
	\end{align*}
\end{theorem}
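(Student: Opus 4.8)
The plan is to combine the two complementary estimates already established in this section: the uniform-in-$N$ exponential decay of Theorem \ref{4.1}, valid on all of $[0,\infty)$, and the finite-in-time propagation of chaos of Theorem \ref{t4.2}, which captures the decay in $N$ but deteriorates as $t$ grows. Writing $Z_N(t):=E[|x_t^i-\bar{x}_t^i|^2]+E[|v_t^i-\bar{v}_t^i|^2]$, Theorem \ref{4.1} furnishes $N$-independent constants $C,C_3>0$ with $Z_N(t)\le Ce^{-C_3 t}$ for all $t\ge0$, while Theorem \ref{t4.2} furnishes $N$-independent constants $\tilde C,\tilde C_3>0$ with $Z_N(t)\le \tilde C\,N^{-e^{-\tilde C_3 t}}$ on every interval $[0,T]$. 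The idea is to split the supremum over time at a cutoff $T_N\to\infty$ chosen so that both bounds vanish as $N\to\infty$.

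First I would write
\[
\sup_{0\le t<\infty}Z_N(t)\le \max\left\{\sup_{0\le t\le T_N}Z_N(t),\ \sup_{t\ge T_N}Z_N(t)\right\}.
\]
On $[0,T_N]$ I bound the first piece by Theorem \ref{t4.2}: since $t\mapsto N^{-e^{-\tilde C_3 t}}=\exp(-e^{-\tilde C_3 t}\ln N)$ is increasing in $t$ for $N>1$, its supremum is attained at $t=T_N$, giving $\sup_{0\le t\le T_N}Z_N(t)\le \tilde C\exp(-e^{-\tilde C_3 T_N}\ln N)$. On $[T_N,\infty)$ I bound the second piece by Theorem \ref{4.1}: since $e^{-C_3 t}$ is decreasing, $\sup_{t\ge T_N}Z_N(t)\le Ce^{-C_3 T_N}$.

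It then remains to calibrate $T_N$ so that both exponents blow up. Taking, for instance, $T_N:=\frac{1}{2\tilde C_3}\ln\ln N$ gives $e^{-\tilde C_3 T_N}=(\ln N)^{-1/2}$, so that $e^{-\tilde C_3 T_N}\ln N=\sqrt{\ln N}\to\infty$ and the first piece is dominated by $\tilde C\exp(-\sqrt{\ln N})\to0$; simultaneously $T_N\to\infty$ yields $Ce^{-C_3 T_N}=C(\ln N)^{-C_3/(2\tilde C_3)}\to0$ for the second piece. As all four constants are independent of $N$, letting $N\to\infty$ forces $\sup_{0\le t<\infty}Z_N(t)\to0$, which is precisely the claimed $\varlimsup$ equal to zero (indeed a genuine limit, since $Z_N\ge0$).

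The main obstacle is not any individual estimate but the tension between the two: the finite-time bound $N^{-e^{-\tilde C_3 t}}$ degenerates to $1$ as $t\to\infty$, whereas the global bound $Ce^{-C_3 t}$ is too crude near $t=0$ (it does not register the vanishing of $Z_N$ in $N$). The delicate point is thus the choice of the crossover time $T_N$, which must tend to infinity to annihilate the global-in-time piece yet grow slowly enough—below the order $\ln\ln N$—that $e^{-\tilde C_3 T_N}\ln N$ still diverges so as to annihilate the finite-time piece. A secondary point I would verify carefully is that the constants produced in the proofs of Theorems \ref{4.1} and \ref{t4.2} are genuinely $N$-independent (the factors $\sqrt{(N-1)/N}\le1$ and $1/N\le1$ appearing in the interaction estimates are harmless), since this uniformity is exactly what makes the two-sided squeeze work.
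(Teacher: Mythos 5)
Your overall strategy---splitting the time axis and playing the uniform exponential decay of Theorem \ref{4.1} against the finite-time propagation of chaos of Theorem \ref{t4.2}---is exactly the right pair of ingredients, and it is the same two-estimate structure the paper uses. But your implementation has a genuine gap at the crossover: you let the cutoff $T_N\to\infty$ while treating the constants $\tilde C,\tilde C_3$ of Theorem \ref{t4.2} as absolute. Those constants are not claimed to be uniform in the time horizon: the theorem is stated ``for any finite time interval $[0,T]$'' with $C$ independent of $N$ (not of $T$), and its proof invokes the Bolley--Ca\~nizo--Carrillo estimates precisely in the form ``given $T>0$, there exists $C>0$ such that $\mathcal{I}_{3a}\le C(1+r)\mathfrak{L}(t)+Ce^{-r}$ for all $0\le t\le T$'', so both the prefactor and the rate in $\tilde C\,N^{-e^{-\tilde C_3 t}}$ may depend on $T$. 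Your calibration $T_N=\frac{1}{2\tilde C_3}\ln\ln N$ therefore applies the finite-time bound outside its established range: if $C(T)$ grows with $T$ (nothing in the paper rules this out), the exponent $e^{-C(T_N)T_N}\ln N$ need not diverge and the first piece of your squeeze is uncontrolled. You explicitly flag checking $N$-independence of the constants, but the dangerous dependence here is on $T$, not on $N$.

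The repair is to keep the cutoff independent of $N$, which is what the paper does (phrased as a contradiction): assuming the $\varlimsup$ equals $D_1>0$, Theorem \ref{4.1} supplies a \emph{fixed} $T_0$ with $\sup_{t\ge T_0}(E[|x_t^i-\bar{x}_t^i|^2]+E[|v_t^i-\bar{v}_t^i|^2])\le D_1/2$ for every $N$, forcing the $\varlimsup$ to be realized on the fixed compact interval $[0,T_0]$, where Theorem \ref{t4.2} (whose $T_0$-dependent constant is now harmless) sends it to zero as $N\to\infty$. Equivalently, in direct form: for each $\epsilon>0$ choose $T_0$ with $Ce^{-C_3T_0}\le\epsilon$, conclude $\varlimsup_{N\to\infty}\sup_{0\le t<\infty}\le\epsilon$, and let $\epsilon\downarrow0$. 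Your diagonalization could in principle be salvaged by choosing $T_N$ adaptively so that $C(T_N)\,T_N\le\frac12\ln\ln N$, but that requires quantitative control of the growth of $C(T)$, which neither you nor the paper establish; the fixed-cutoff argument needs nothing beyond the two theorems as stated.
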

\begin{proof}
 We can prove this theorem by a contradiction argument. Suppose  that
\begin{align}
		\begin{aligned} \label{21}
			 \varlimsup\limits_{N\to+\infty}\sup\limits_{0\le t<+\infty}(E[|x_t^i-\bar{x}_t^i|^2]+E[|v_t^i-\bar{v}_t^i|^2])=D_1>0.
		\end{aligned}
	\end{align}
	  By Theorem \ref{4.1}, we choose a constant  $T_0$ such that, for any $N$
	  \begin{align}
	  	\begin{aligned} \label{22}
	  		\sup\limits_{T_0\le t<+\infty}(E[|x_t^i-\bar{x}_t^i|^2]+E[|v_t^i-\bar{v}_t^i|^2])\le Ce^{-C_m T_0}\le\frac{D_1}{2}.
	  	\end{aligned}
	  \end{align}
	 Combining \eqref{21} and \eqref{22}, we know
	     \begin{equation*}
	 \varlimsup\limits_{N\to+\infty} \sup\limits_{0\le T\le T_0}(E[|x_t^i-\bar{x}_t^i|^2]+E[|v_t^i-\bar{v}_t^i|^2])= D_1,
	  \end{equation*}
	  which contradicts to Theorem \ref{t4.2}. Therefore, we conclude  $D_1=0$.
\end{proof}

\section{Acknowledgment}
This work  is supported by Natural Science Foundation of China No.12001097 and Fundamental Research Funds for the Central Universities.
The first author would like to  express the gratitude to Professor Seung-Yeal Ha from Seoul National  University  for  taking her into this research area.

 \bibliography{references}

\end{document}